\def\cc{{\mathcal C}}
\def\cf{{\mathcal F}}
\def\cg{{\mathcal G}}
\def\ch{{\mathcal H}}
\def\ci{{\mathcal I}}
\def\ct{{\mathcal T}}
\def\D{{\mathbb D}}
\def\E{{\mathbb E}}
\def\I{{\mathbb I}}
\def\L{{\mathbb L}}
\def\N{{\mathbb N}}
\def\P{{\mathbb P}}
\def\R{{\mathbb R}}
\def\ee{{\bf e}}
\def\hh{\widehat{H}}
\def\l{\ell}
\def\ltwo{L^2([0,T])}
\def\s{\star}
\def\ind#1{{\bf 1}_{#1}}
\def\cond{\; | \;}
\def\abs#1{\left|#1\right|}
\def\Var{\mathop{\rm Var}\nolimits}
\def\norm#1{\mathop{\left\| #1 \right\|}\nolimits}
\def\inv#1{\mathop{\frac{1}{ #1}}\nolimits}
\def\expp#1{\mathop {\mathrm{e}^{ #1}}}
\def\esssup{\mathop {\mathrm{ess sup}}\nolimits}
\def\spaceIndex{A^{\otimes d}_{p,n}}
\theoremstyle{plain}
\newtheorem{thm}{Theorem}[section]
\newtheorem{prop}[thm]{Proposition}
\newtheorem{cor}[thm]{Corollary}
\theoremstyle{definition}
\newtheorem{rem}[thm]{Remark}
\title{Pricing American options using martingale bases}
\date{\today}
\author{Jérôme Lelong\footnote{This project was supported by the Finance for Energy Market
Research Centre, www.fime-lab.org. \\
The High Performance Computations presented in this paper were performed using the Froggy
platform of the CIMENT infrastructure (https://ciment.ujf-grenoble.fr), which is supported
by the Rhône-Alpes region (GRANT CPER07\_13 CIRA) and the Equip@Meso project (reference
ANR-10-EQPX-29-01) of the programme Investissements d'Avenir supervised by the Agence
Nationale pour la Recherche.} \\
\vskip1em \textit{Laboratoire Jean Kuntzmann, Université
    Grenoble Alpes, FRANCE.} \\ \vskip1em
  \texttt{jerome.lelong@imag.fr}}
\begin{document}
\maketitle

\begin{abstract}
  In this work, we propose an algorithm to price American options by directly solving the
  dual minimization problem introduced by \cite{rogers}. Our approach relies on approximating the
  set of uniformly square integrable martingales by a finite dimensional Wiener chaos
  expansion. Then, we use a sample average approximation technique to efficiently solve
  the optimization problem. Unlike all the regression based methods, our method can
  transparently deal with path dependent options without extra computations and a
  parallel implementation writes easily with very little communication and no centralized
  work. We test our approach on several multi--dimensional options with up to $40$ assets
  and show the impressive scalability of the parallel implementation. \\

  \noindent\textbf{Key words}: American option, duality, Snell envelope, stochastic
  optimization, sample average approximation, high performance computing, Wiener chaos
  expansion.  \\

  \noindent\textbf{AMS subject classification}: 62L20, 62L15, 91G60, 65Y05, 60H07
\end{abstract}

\section{Introduction}
\label{sec:intro}

The pricing of American options quickly becomes challenging as the dimension increases and
the payoff gets complex. Many people have contributed to this problem usually by
considering its dynamic programming principle formulation \cite{tilley93},
\cite{carriere96}, \cite{tsit:vanr:01}, \cite{LS01}, \cite{brgl04} and \cite{bapa03}.
Among this so extensive literature, the practitioners seem to prefer the iterative optimal
policy approach proposed by \cite{LS01}, which proves to be quite efficient in many
situations. However, \emph{true} path--dependent options cannot be handled by this
approach. Solving the dynamic programming principle requires the computation of a
conditional expectation, which is eventually dealt with regression techniques. These
techniques are know to suffer from the curse of dimensionality: global regression methods
lead to high dimensional linear algebra problems, whereas local methods see the number of
domains blow up with the dimension. Despite the numerous parallel implementation of this
techniques (see for instance \cite{dbggs_10,CPE:CPE2862}), we cannot expect to obtain a
fully scalable algorithm. In this work, we follow the dual approach initiated by
\cite{rogers}, and \cite{da:ka:94}, which can naturally handle path dependent options. To
make it implementable, we need a smart and finite dimensional approximation of the set of
uniformly integrable martingales. We chose the set of truncated Wiener chaos expansions,
which have some magic features in our problem: it regularizes the optimization problem and
computing its conditional expectation exactly is straightforward. Then, the
pricing problem boils down finite dimensional, convex and differentiable optimization
problem.  The optimization problem is solved using a \emph{Sample Average Approximation}
(see \cite{MR1241645}), which can be easily and efficiently implemented using parallel
computing.  \\

We fix some finite time horizon $T>0$ and a filtered probability space $(\Omega, \cf,
(\cf_t)_{0 \le t \le T}, \P)$, where $(\cf_t)_{0 \le t \le T}$ is supposed to be the
natural augmented filtration of a $d-$dimensional Brownian motion $B$.  On this space, we
consider an adapted process $(S_t)_{0 \le t \le T}$ with values in $\R^{d'}$ modeling a
$d'$--dimensional underlying asset. The number of assets $d'$ can be smaller than the
dimension $d$ of the Brownian motion to encompass the case of stochastic volatility models
or stochastic interest rate.  We assume that the short interest rate is modelled by an
adapted process $(r_t)_{0 \le t \le T}$ with values in $\R_+$ and that $\P$ is an
associated risk neutral measure.  We consider an adapted payoff process $\tilde Z$ and
introduce its discounted value process $\left(Z_t = \expp{-\int_0^t r_s ds} \tilde
Z_t\right)_{0 \le t \le T}$. We assume that the paths of $Z$ are right continuous and that
$\sup_{t \in [0,T]} \abs{Z_t} \in L^2$. The process $\tilde Z$ can obviously take the
simple form $(\phi(S_t))_{t \le T}$ but it can also depend on the whole path of $S$ up to
the current time. So, our framework transparently deals with path--dependent option, which
are far more difficult to handle using regression techniques.

We consider the American option paying $\tilde Z_t$ to its holder if exercised at time
$t$.  Standard arbitrage pricing theory defines the discounted time-$t$ value of the
American option to be
\begin{equation}
  \label{eq:price_t_amer}
  U_{t} = \esssup_{\tau \in \ct_{t}} \E[Z_\tau | \cf_{t_k}]
\end{equation}
where $\ct_t$ denotes the set of $\cf-$stopping times with values in $[t,T]$. The
integrability properties of $Z$ ensure that $U$ is a supermartingale of class (D) and
hence has a Doob--Meyer decomposition
\begin{equation}
  \label{eq:doob-meyer_amer}
  U_{t} = U_0 + M^\s_{t} - A^\s_{t}
\end{equation}
where $M^\s$ is a martingale vanishing at zero and $A^\s$ is a predictable integrable
increasing process also vanishing at zero. With our assumptions on $Z$, $M^\s$ is square
integrable. \cite{rogers} found an alternative representation of the
price at time-$0$ of the American option as the minimum value of the following
optimization problem
\begin{equation}
  \label{eq:prix_dual}
  U_0 = \inf_{M \in H_0^2} \E\left[ \sup_{t \le T} (Z_{t} - M_{t}) \right]
  = \E\left[ \sup_{t \le T} (Z_{t} - M^\s_{t}) \right]
\end{equation}
where $H_0^2$ denotes the set of square integrable martingales vanishing at zero. A
martingale reaching the infimum is called an \emph{optimal} martingale. As the dual price
problem writes as a convex minimisation problem, the set of all optimal martingales is a
convex subset of $H^2_0$. Among the martingales reaching the infimum
in~\eqref{eq:prix_dual}, some of them actually satisfy the pathwise equality $\sup_{t \le
  T} Z_{t} - M_{t} = U_0$. These martingales are called \emph{surely optimal}.  Any surely
optimal martingale reaches the lower bound in~\eqref{eq:prix_dual} but not all optimal
martingales are surely optimal. We refer to \cite{schoen12-1} for a detailed
characterisation of optimal martingales.  Anyway, \cite{jamshidian07} proved the
uniqueness of surely optimal martingales within the continuing region, ie. for any surely
optimal martingale $M$  and any optimal strategy $\tau$, $(M_{t \wedge \tau})_t = (M^\s_{t
  \wedge \tau})_t$ a.s. 

The most famous method using the dual representation~\eqref{eq:prix_dual} is probably the
primal--dual approach of~\cite{An04primal}, which heavily relies on the knowledge of an
optimal exercising policy. The a priori knowledge may take the form of nested Monte Carlo
simulations as in \cite{schoenmakers2005robust}, and \cite{kolodko2004upper}. To
circumvent this difficulty, \cite{MR2669406} explained how to construct a \emph{good}
martingale. In a Wiener framework, \cite{Bel09} investigated this approach by relying on
the martingale representation theorem to build \emph{good} martingales. When trying to
practically use the dual formulation~\eqref{eq:prix_dual}, the first difficulty is
to find a rich enough but finite dimensional approximation of $H_0^2$ and then we face a
finite although potentially high--dimensional minimization problem (see
\cite{belomestny13-2} for one way of handling this approach).

The minimization problem~\eqref{eq:prix_dual} can be equivalently formulated as
\begin{equation}
  \label{eq:prix_dual2} 
  U_0 = \inf_{X \in L^2_0(\Omega,\cf_T, \P)}
  \E\left[ \sup_{0 \le t \le T} (Z_{t} - \E[X|\cf_{t}] )
  \right]
\end{equation}
where $L^2_0(\Omega,\cf_T, \P)$ is the set of square integrable $\cf_T-$ random variables with
zero mean. In this work, we suggest to use the truncated Wiener chaos expansion as a finite
dimensional approximation of $L^2(\Omega, \cf_T, \P)$. Since Wiener chaos are orthogonal
for the $L^2$ inner product, the computations of the conditional expectations
$\E[X|\cf_{t}]$ become straightforward and boil down to dropping some terms in the chaos
expansion, which makes our approach very convenient. Based on this approximation, we
propose a scalable algorithm and study its convergence.

The paper starts with the presentation of the Wiener chaos expansion and some of its
useful properties in Section~\ref{sec:wiener}. Then, we can develop the core of our work
in Section~\ref{sec:wiener-saa} in which we explain how the price of the American option
can be approximated by the solution of a finite dimensional optimization problem. First,
we analyze the properties of the optimization problem in order to prove the convergence of
its solution to the American option price. Second, we study its sample average
approximation, which makes the problem tractable, and prove its convergence. Based on all
these theoretical results, we present our algorithm in Section~\ref{sec:algo} and discuss
its parallel implementation on distributed memory architectures. Finally, some numerical
examples are presented in Section~\ref{sec:numerics}.

\section*{Notation}

\begin{itemize}
  \item For $n \ge 1$, $0= t_0 < t_1 < \dots < t_n=T$ is a time grid of $[0,T]$ satisfying
    $\lim_{n \to \infty} \sup_{0 \le k \le n-1} \abs{t_{k+1} - t_k} = 0$.
  \item For $n \ge 1$, the discrete time filtration $\cg$ is defined by $\cg_k =
    \sigma(B_{t_{i+1}} - B_{t_i}, i=0,\dots,k-1)$ for all $1 \le k \le n$, while $\cg_0$
    is the trivial sigma algebra. Obviously, $\cg_k \subset
    \cf_{t_k}$ for all $0 \le k \le n$.
  \item For $1 \le q \le d$, $\I(r) \in \{0,1\}^{n}$ denotes the vector $(\underbrace{0,\dots, 0}_{r-1},
    1, \underbrace{0, \dots, 0}_{n-r})$.
  \item For $1 \le q \le d$, and $1 \le r \le n$, $\I(r, q) \in \N^{n \times d}$ with all
    components equal to $0$ except the component with index $(r, q)$ which is equal to
    $1$.
\end{itemize}

\section{Wiener chaos expansion}
\label{sec:wiener}

For the sake of clearness, we first present the Wiener chaos expansion in the
case $d=1$ (ie. $B$ is a real valued Brownian motion). 

\subsection{General framework in dimension one}

\paragraph{Iterated stochastic integral approach.}
For a sequence of deterministic functions $(f_n)_{n \ge 1}$ such that $h_n :
[0,T]^n \longrightarrow \R$ and $\int_{[0,T]^n} |f_n(t)|^2 dt< \infty$, we define the
iterated stochastic integral by
\begin{align}
  \label{eq:Jn}
  \begin{cases}
  V_1(f_1) &=  \int_0^T  f_1(t_1) dB_{t_1} \\
  V_n(f_n) &= \int_0^T  \int_0^{t_n} \dots \int_0^{t_2}
  f_n (t_n, \dots, t_1) \; dB_{t_1} \dots dB_{t_n}, \quad \forall \; n \ge 2
\end{cases}
\end{align}
The set $\{V_n(f) : f \in L^2([0,T]^n)\}$ is a subspace of $L^2(\Omega, \cf_T,
\P)$, whose closure is often referred to as the Wiener chaos of order $n$.

We know from \cite{nualart_98} that any square integrable, real valued
and $\cf_T-$ measurable random variable $F$ can be expanded as a series of
iterated stochastic integrals
\begin{align}
  \label{eq:chaos-expansion}
  F = \E[F] + \sum_{n \ge 1} V_n(f_n)
\end{align}
where the deterministic functions $(f_i)_{i \ge 1}$ are symmetric. We define the
chaos expansion of order $p \ge 1$ as
\begin{align}
  \label{eq:chaos-expansion-p}
  C_p(F) = \E[F] + \sum_{n = 1}^p V_n(f_n)
\end{align}
which corresponds to the truncation of the sum in
Equality~\eqref{eq:chaos-expansion} to $p$ terms. For instance, for $p=2$,
$C_2(F)$ only involves a Wiener integral and a double stochastic integral
apart from the constant term $\E[F]$.

\paragraph{Hermite polynomials approach.} The iterated stochastic integral
approach to the Wiener chaos expansion is not applicable in practice and
cannot be generalized to multi--dimensional Brownian motions.
Hopefully, this expansion can be formulated in terms of Hermite polynomials. 

Let $H_i$ be the $i-th$ Hermite polynomial defined by
\begin{align}
  \label{eq:hermite}
  H_0(x) = 1; \qquad 
  H_i(x) = (-1)^i \expp{x^2/2} \frac{d^i}{dx^i} (\expp{-x^2/2}),
  \mbox{ for } i \ge 1.
\end{align}
They satisfy for all integer $i$, $H_i' = H_{i-1}$ with the convention $H_{-1} = 0$.
We recall that if $(X,Y)$ is a random normal vector with $\E[X] = \E[Y] = 0$ and
$\E[X^2] = \E[Y^2] = 1$ 
\begin{equation}
  \label{eq:prop-H}
  \E[H_i(X) H_j(Y)]  = i!\left( \E[XY] \right)^i \; \ind{i = j}.
\end{equation}
For all $i \ge 0$, we define the spaces
\begin{align}
  \label{eq:Hspaces}
  \ch_i = \span \left\{ H_i\left(\int_0^T f_t dB_t\right)\; : \; f \in L^2([0,T]) \right\}
\end{align}
whose $L^2$ closure corresponds to the Wiener chaos of order $i$.

We consider the indicator functions of the grid defined by $t_0 < t_1 < \dots. < t_n$ 
\begin{align}
  \label{eq:gi}
  f_i (t) = \ind{]t_{i-1}, t_i]}(t) / \sqrt{t_{i} - t_{i-1}}, \quad i=1,\dots,n, \;
\end{align}
With this choice for the $(f_i)_i$, 
\begin{align*}
  \int_0^T f_i(t) dB_t = \frac{B_{t_i} - B_{t_{i-1}}}{\sqrt{t_{i} -  t_{i-1}}} = G_i.
\end{align*}
Note that the random variables $G_i$ are i.i.d. following the standard normal
distribution.  We complete these $n$ functions $f_1, \dots, f_n$ into an
orthonormal basis of $L^2([0,T])$ and introduce the truncated chaos expansion
of order $p$ on the basis of $L^2$
\begin{align}
  \label{eq:chaos-p}
  C_{p,n}(F) = \sum_{\alpha \in A_{p,n}} \lambda_\alpha \prod_{i \ge 1}
  H_{\alpha_i} (G_i)
\end{align}
where $A_{p,n} = \{ \alpha \in \N^n \; : \; \norm{\alpha}_1 \le p \}$ with
$\norm{\alpha}_1 = \sum_{i \ge 0} \alpha_i$.  Using Equation~\eqref{eq:prop-H}, we deduce
that the coefficients of the above decomposition are uniquely determined by
\begin{align}
  \label{eq:chaos-d}
  \lambda_\alpha = \frac{\E \left[
      F \prod_{i \ge 1} H_{\alpha_i} (G_i) \right]}{\left(\prod_{i \ge 1}
      \alpha_i! \right)}.
\end{align}
This formula can be rewritten more clearly by introducing the generalized
Hermite polynomials defined for any multi--index $\alpha = (\alpha_i)_{i \ge 1}
\in \N^\N$
\begin{align}
  \label{eq:Hnd}
  \hh_\alpha (x) = \prod_{i \ge 1} H_{\alpha_i} (x_i), \quad \text{for } x \in
  \R^\N.
\end{align}
With this notation, Equation~\eqref{eq:chaos-p} becomes
\begin{align*}
  C_{p,n}(F) = \sum_{\alpha \in A_{p,n}} \lambda_\alpha \hh_{\alpha} (G_1, \dots, G_n).
\end{align*}
\begin{prop}
  \label{prop:Et-chaos}
  Let $F$ be a real valued random variable in $L^2(\Omega, \cf_T, \P)$ and let $k
  \in \{1, \dots, n\}$ and $p \ge 0$
  \begin{align*}
    \E[C_{p,n}(F) | \cf_{t_k}] = \sum_{\alpha \in A_{p,n}^k } \lambda_\alpha \;
    \hh_\alpha (G_1, \dots, G_{n})
  \end{align*}
  with $A_{p,n}^k = \{ \alpha \in \N^n \; : \; \norm{\alpha}_1 \le p, \; \alpha_\l = 0
    \; \forall \l > k \}$.
\end{prop}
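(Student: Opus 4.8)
The plan is to combine the linearity of conditional expectation with the measurability and independence that the grid confers on the variables $G_i$. Writing $C_{p,n}(F) = \sum_{\alpha \in A_{p,n}} \lambda_\alpha \hh_\alpha(G_1,\dots,G_n)$ and using linearity of $\E[\cdot \cond \cf_{t_k}]$, it suffices to compute $\E[\hh_\alpha(G_1,\dots,G_n) \cond \cf_{t_k}]$ for each fixed multi-index $\alpha \in A_{p,n}$. The key structural fact is the behaviour of $G_i = (B_{t_i} - B_{t_{i-1}})/\sqrt{t_i - t_{i-1}}$ relative to $\cf_{t_k}$: since $\cf$ is the natural filtration of $B$, the variable $G_i$ is $\cf_{t_k}$-measurable for every $i \le k$, whereas for $i > k$ it is built from the increment of $B$ over the interval $]t_{i-1}, t_i]$ with $t_{i-1} \ge t_k$ and is therefore independent of $\cf_{t_k}$.

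I would then split the product of Hermite polynomials along this dichotomy,
\[ \hh_\alpha(G_1,\dots,G_n) = \underbrace{\left(\prod_{i=1}^{k} H_{\alpha_i}(G_i)\right)}_{\cf_{t_k}\text{-measurable}} \; \underbrace{\left(\prod_{i=k+1}^{n} H_{\alpha_i}(G_i)\right)}_{\text{independent of } \cf_{t_k}}, \]
pull the first factor out of the conditional expectation, and replace the conditional expectation of the second factor by its plain expectation. By independence of the $G_i$ this expectation factorizes as $\prod_{i=k+1}^n \E[H_{\alpha_i}(G_i)]$, and the value of each factor follows in one line from~\eqref{eq:prop-H}: taking $H_0 \equiv 1$ as the second polynomial gives $\E[H_m(G_i)] = \E[H_m(G_i) H_0(G_i)] = \ind{m = 0}$ for every $m \ge 0$. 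Hence $\E\left[\prod_{i=k+1}^n H_{\alpha_i}(G_i)\right] = \ind{\alpha_\l = 0 \; \forall \l > k}$.

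Putting the pieces together, the conditional expectation of the generic term is zero unless $\alpha_\l = 0$ for all $\l > k$, and when this condition holds the surviving factor $\prod_{i=1}^k H_{\alpha_i}(G_i)$ equals $\hh_\alpha(G_1,\dots,G_n)$, the omitted factors being $H_0 = 1$. Retaining only these multi-indices is exactly the restriction from $A_{p,n}$ to $A_{p,n}^k$, which gives the announced formula. The only step that requires genuine care is the measurability/independence dichotomy of the $G_i$ with respect to $\cf_{t_k}$; everything else reduces to linearity of conditional expectation and the orthogonality relation~\eqref{eq:prop-H}. Because $A_{p,n}$ is finite, there is no interchange of summation and expectation to justify and hence no convergence issue to treat.
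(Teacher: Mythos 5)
Your proposal is correct and follows essentially the same route as the paper: condition term by term, split the Hermite product at index $k$ into an $\cf_{t_k}$-measurable factor and an independent one, factor the latter's expectation over the independent increments, and observe that $\E[H_m(G_i)]$ vanishes unless $m=0$, which restricts the sum to $A_{p,n}^k$. Your explicit derivation of $\E[H_m(G_i)]=\ind{m=0}$ from~\eqref{eq:prop-H} with $H_0\equiv 1$ is just a slightly more detailed rendering of the paper's one-line observation.
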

\begin{proof}
  Taking the conditional expectation in Eq.~\eqref{eq:chaos-p} leads to
  \begin{align}
    \label{eq:Et-proof}
    \E[C_{p,n} (F) | \cf_{t_k}] = \sum_{\alpha \in A_{p,n}} \lambda_\alpha
    \prod_{i=1}^k H_{\alpha_i} (G_i) \E\left[ \prod_{i=k+1}^n H_{\alpha_i} (G_i)
    \Big|\cf_{t_k} \right].
  \end{align}
  Since the Brownian increments after time $t_k$ are independent of
  $\cf_{t_k}$ and are independent of one another , $\E\left[ \prod_{i=k+1}^n
  H_{\alpha_i} (G_i) |\cf_{t_k} \right] = \prod_{i=k+1}^n \E\left[ H_{\alpha_i}
  (G_i) \right]$, which is zero as soon as $\sum_{i=k+1}^n \alpha_i > 0$. Hence,
  the sum in Equation~\eqref{eq:Et-proof} is reduced to the sum over the set of
  multi--indices $\alpha \in A_{p,n}$ such that $\alpha_i = 0$ for all $i>k$,
  which is exactly the definition of the set $A_{p,n}^k$. 
\end{proof}
\begin{rem}
  \label{rem:Et-chaos}
  Since the sum appearing in $\E[C_{p,n}(F) | \cf_{t_k}]$ is reduced to a sum over the set
  of multi--indices $\alpha \in A_{p,n}^k$, it actually only
  depends on the first $k$ increments $(G_1, \dots, G_k)$. One can easily check
  that $\E[C_{p,n}(F) | \cf_{t_k}]$ is actually given by the chaos expansion of $F$
  on the first $k$ Brownian increments. Hence, computing a conditional
  expectation simply boils down to dropping term. While it may look like a naive way to
  proceed, it is indeed correct in our setting.
\end{rem}

\begin{prop}
  \label{prop:D-chaos}
 Let $F$ be a real valued random variable in $L^2(\Omega, \cf_T, \P)$ and let $k
 \in \{1, \dots, n\}$ and $p \ge 1$. For all $t \in ]t_{r-1}, t_{r}]$ with $1 \le r \le k$,
  \begin{align*}
    D_t \E[C_{p,n}(F) | \cf_{t_k}] = \frac{1}{\sqrt{h}} 
   \sum_{\alpha \in A_{p,n}^{k} , \; \alpha_r \ge
     1} \lambda_\alpha \; \hh_{\alpha - \I(r)} (G_1, \dots, G_{n})
  \end{align*}
  where $\alpha - \I(r) = (\alpha_1, \dots, \alpha_{r-1}, \alpha_r -1,
  \alpha_{r+1}, \dots, \alpha_n)$. 
\end{prop}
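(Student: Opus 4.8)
The plan is to differentiate the closed-form expression for the conditional expectation given by Proposition~\ref{prop:Et-chaos} term by term, applying the chain rule for the Malliavin derivative. By Proposition~\ref{prop:Et-chaos}, $\E[C_{p,n}(F) | \cf_{t_k}] = \sum_{\alpha \in A_{p,n}^k} \lambda_\alpha\, \hh_\alpha(G_1,\dots,G_n)$ is a \emph{finite} linear combination of polynomials in the Gaussian vector $(G_1,\dots,G_n)$, each $G_i = \int_0^T f_i\,dB$ being a Wiener integral. Such a smooth functional of finitely many Wiener integrals belongs to the domain of $D$, and its derivative follows by linearity and the chain rule, so no convergence issue arises and the differentiation may safely be carried out on each summand.

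First I would record the two elementary ingredients. The chain rule gives $D_t \hh_\alpha(G_1,\dots,G_n) = \sum_{i=1}^n (\partial_{x_i}\hh_\alpha)(G_1,\dots,G_n)\, D_t G_i$, and from~\eqref{eq:gi} one has $D_t G_i = f_i(t) = \ind{]t_{i-1},t_i]}(t)/\sqrt{t_i - t_{i-1}}$. Fixing now $t \in ]t_{r-1}, t_r]$ with $1 \le r \le k$, the functions $f_i$ have pairwise disjoint supports, so $f_i(t) = 0$ for every $i \ne r$ while $f_r(t) = 1/\sqrt{h}$ with $h = t_r - t_{r-1}$. Hence only the $i = r$ term survives and $D_t \hh_\alpha(G) = \frac{1}{\sqrt{h}} (\partial_{x_r}\hh_\alpha)(G)$. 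The core computation is then the partial derivative of the generalized Hermite polynomial: since $\hh_\alpha(x) = \prod_{i \ge 1} H_{\alpha_i}(x_i)$ factorizes and only the $r$-th factor depends on $x_r$, the identity $H_j' = H_{j-1}$ yields $\partial_{x_r}\hh_\alpha(x) = H_{\alpha_r - 1}(x_r) \prod_{i \ne r} H_{\alpha_i}(x_i) = \hh_{\alpha - \I(r)}(x)$. Multiplying by $1/\sqrt{h}$ and summing over $A_{p,n}^k$ produces the announced expression.

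There is no serious obstacle in this argument; the only point requiring care is the restriction $\alpha_r \ge 1$ in the final summation. This comes precisely from the convention $H_{-1} = 0$: when $\alpha_r = 0$ the factor $H_{\alpha_r}'(x_r) = H_{-1}(x_r) = 0$, so the corresponding summand vanishes identically and drops out. Tracking this boundary case is exactly what turns the naive sum over all of $A_{p,n}^k$ into the sum over $\{\alpha \in A_{p,n}^k : \alpha_r \ge 1\}$, completing the proof.
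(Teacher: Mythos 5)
Your proposal is correct and follows essentially the same route as the paper: both start from Proposition~\ref{prop:Et-chaos}, apply the Malliavin chain rule to each generalized Hermite polynomial, use that $D_t G_i = f_i(t)$ vanishes for $i \ne r$ when $t \in ]t_{r-1},t_r]$, and invoke $H_j' = H_{j-1}$ with the convention $H_{-1}=0$ to obtain the restriction $\alpha_r \ge 1$. If anything, you are slightly more explicit than the paper about where the factor $1/\sqrt{h}$ enters and about why term-by-term differentiation of the finite sum is legitimate, which is a welcome addition rather than a deviation.
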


\begin{proof}
  From Proposition~\ref{prop:Et-chaos}, we know that for all $1 \le k \le n$
  \begin{align*}
    \E[C_{p,n}(F) | \cf_{t_k}] = \sum_{\alpha \in A_{p,n}^k} \lambda_\alpha \;
    \hh_\alpha (G_1, \dots, G_{n})
  \end{align*}
  Let $r \le k$ and $t \in ]t_{r-1}, t_{r}]$. The chain rule for the Malliavin
  derivative yields 
  \begin{align*}
    D_t \E[C_{p,n}(F) | \cf_{t_k}] & =  \sum_{\alpha \in A_{p,n}^k} \lambda_\alpha \;
    D_t \left (\prod_{i=1}^k H_{\alpha_i} (G_i) \right) \\
    D_t \left (\prod_{i=1}^k H_{\alpha_i} (G_i) \right)& =  
    \prod_{i=1, i \ne r}^k H_{\alpha_i} (G_i) H_{\alpha_r}'(G_r) \\
    & = \ind{\alpha_r \ge 1}  \prod_{i=1, i \ne r}^k H_{\alpha_i} (G_i) H_{\alpha_r-1}(G_r)\\
    & = \ind{\alpha_r \ge 1}  \hh_{\alpha - \I(r)} (G_1, \dots, G_n). \qedhere
  \end{align*}
\end{proof}

\subsection{Multi--dimensional chaos expansion}

In the previous section, we explained how a random variable measurable for a
sigma field generated by a one--dimensional Brownian motion could be
approximated by a finite sum of Hermite polynomials of Brownian increments.

In this section, we are back to our original multi--dimensional setting, as explained in
Section~\ref{sec:intro}. The process $B$ is a Brownian motion with values in $\R^d$.  The
key idea to extend the Hermite polynomial expansion to a higher dimensional setting is to
consider a tensor product of Hermite polynomials evaluated on a tensor basis of
$L^2([0,T], \R^d)$.

Consider the functions $(h_i)_i$ with values in $\R^d$ defined by
\begin{align*}
  h_i^j(t) = \frac{\ind{]t_{i-1}, t_i]}(t)}{\sqrt{h}} \ee_j, \; i=1,\dots,n, \; j=1,\dots,d
\end{align*}
where $(\ee_1,\dots, \ee_d)$ denotes the canonical basis of $\R^d$.
The $p-th$ order Wiener chaos $\cc_{p,n}$ is defined as the closure of 
\begin{align*}
  \left\{ \prod_{j=1}^d \hh_{\alpha^j}(G^{j}_1, \dots, G^j_n) \; : \;
    \alpha \in (\N^n)^d, \, \norm{\alpha}_1 \le p \right\}
\end{align*}
where $\norm{\alpha}_1 = \sum_{i=1}^n \sum_{j=1}^d \alpha_i^j$ and $G^j_i =
\frac{B^j_{t_i} - B^j_{t_{i-1}}}{\sqrt{h}}$. Using the independence of the Brownian
increments and the orthogonality of the Hermite polynomials, the chaos expansion of a
square integrable random variable $F$ is given by
\begin{align*}
  C_{p,n}(F) = \sum_{\alpha \in A^{\otimes d}_{p,n}} \lambda_\alpha 
  \hh_{\alpha}^{\otimes d} (G_1, \dots, G_n)
\end{align*}
where 
\begin{align}
  \label{eq:Ad}
  \hh^{\otimes d}_\alpha (G_1, \dots, G_n) & = \prod_{j=1}^d
  \hh_{\alpha^j} (G^j_1, \dots, G^j_n) \quad \forall \alpha \in (\N^n)^d \nonumber\\
  A^{\otimes d}_{p, n} & = \left\{ \alpha \in (\N^{n})^d\; : \norm{\alpha}_1 \le p
\right\}.
\end{align}
With an obvious abuse of notation, we write, for  $\lambda \in \R^{A^{\otimes d}_{p, n}}$,
\begin{align*}
  C_{p,n}(\lambda) = \sum_{\alpha \in A^{\otimes d}_{p,n}} \lambda_\alpha 
  \hh_{\alpha}^{\otimes d} (G_1, \dots, G_n).
\end{align*}
We also introduce the set of multi--indices truncated after time $t_k$
\begin{align}
  \label{eq:Adk} A^{\otimes d, k}_{p,n} = \left\{ \alpha \in  A^{\otimes d}_{p, n} \; :
    \; \forall j \in \{1,\dots, d\}, \, \forall \l > k, \; \alpha^j_\l = 0 \right\}.
\end{align}
We introduce the set $\cc_{p,n}$ defined by
\begin{align*}
  \cc_{p,n} = \left\{ F \in L^2(\Omega, \cf_T, P) \; : \; F = C_{p,n}(F) \; a.s. \right\}.
\end{align*}

We can easily deduce the multidimensional counterpart of Proposition~\ref{prop:Et-chaos}
\begin{prop}
  \label{prop:Et-chaos-d}
  Let $F$ be a real valued random variable in $L^2(\Omega, \cf_T, \P)$ and let $k
  \in \{1, \dots, n\}$ and $p \ge 0$
  \begin{align*}
    \E[C_{p,n}(F) | \cf_{t_k}] = \sum_{\alpha \in A^{\otimes d, k}_{p,n}} \lambda_\alpha \;
    \hh^{\otimes d}_\alpha (G_1, \dots, G_{n}).
  \end{align*}
\end{prop}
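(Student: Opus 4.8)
The plan is to reduce the multidimensional statement to the one-dimensional computation already carried out in Proposition~\ref{prop:Et-chaos}, exploiting the tensor-product structure of the chaos expansion. First I would start from the definition
\begin{align*}
  C_{p,n}(F) = \sum_{\alpha \in A^{\otimes d}_{p,n}} \lambda_\alpha
  \prod_{j=1}^d \hh_{\alpha^j}(G^j_1, \dots, G^j_n)
\end{align*}
and take the conditional expectation with respect to $\cf_{t_k}$. The key structural fact is that the Brownian increments $(G^j_i)$ for $i > k$ (all coordinates $j$) are independent of $\cf_{t_k}$ and mutually independent, while the increments for $i \le k$ are $\cf_{t_k}$-measurable. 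Writing each generalized Hermite factor as $\hh_{\alpha^j}(G^j_1,\dots,G^j_n) = \prod_{i=1}^n H_{\alpha^j_i}(G^j_i)$, I would split the product over $i \le k$ (measurable, hence factored out) and $i > k$ (independent of $\cf_{t_k}$).

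Next I would pull out the $\cf_{t_k}$-measurable part and use independence to factorize the conditional expectation of the remaining product, so that for each fixed $\alpha$,
\begin{align*}
  \E\!\left[ \prod_{j=1}^d \prod_{i=k+1}^n H_{\alpha^j_i}(G^j_i) \;\Big|\; \cf_{t_k} \right]
  = \prod_{j=1}^d \prod_{i=k+1}^n \E\!\left[ H_{\alpha^j_i}(G^j_i) \right].
\end{align*}
Since $G^j_i$ is standard normal and $\E[H_m(G)] = 0$ for $m \ge 1$ (this follows from Equation~\eqref{eq:prop-H} with $j=0$), each factor vanishes unless $\alpha^j_i = 0$. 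Hence the whole product is nonzero only when $\alpha^j_i = 0$ for every $j$ and every $i > k$, in which case it equals $1$. This is precisely the condition defining $A^{\otimes d,k}_{p,n}$ in Equation~\eqref{eq:Adk}, so the surviving terms are exactly those indexed by $\alpha \in A^{\otimes d,k}_{p,n}$, and on those terms the factors with $i > k$ all equal $1$, leaving $\hh^{\otimes d}_\alpha(G_1,\dots,G_n)$ unchanged.

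This argument is essentially the one-dimensional proof applied coordinate-by-coordinate, the only genuine addition being the double product over both the time index $i$ and the Brownian coordinate $j$ together with the joint independence of all increments after $t_k$. I do not expect any real obstacle: the orthogonality relation~\eqref{eq:prop-H} and the independence of increments do all the work, exactly as in Proposition~\ref{prop:Et-chaos}. The only point requiring minor care is checking that the multi-index truncation condition $\alpha^j_\l = 0$ for all $\l > k$ and all $j$ matches the definition of $A^{\otimes d,k}_{p,n}$ rather than some weaker per-coordinate condition, but this is immediate from~\eqref{eq:Adk}.
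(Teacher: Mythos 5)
Your proof is correct and is exactly the argument the paper intends: the paper states this proposition as an immediate multidimensional counterpart of Proposition~\ref{prop:Et-chaos}, obtained by the same splitting of the product into the $\cf_{t_k}$-measurable factors ($i \le k$) and the independent factors ($i>k$), with $\E[H_m(G)]=0$ for $m\ge 1$ killing every term outside $A^{\otimes d,k}_{p,n}$. Your coordinate-by-coordinate bookkeeping over the pairs $(i,j)$ matches the definition~\eqref{eq:Adk}, so nothing is missing.
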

\begin{rem}
  \label{rem:chaos-measurability} The discrete time sequence $(\E[C_{p,n}(F) |
  \cf_{t_k}])_{0 \le k \le n}$ is of course adapted to the filtration $(\cf_{t_k})_k$ but
  also to the smaller filtration $(\cg_k)_k$. This property plays a crucial when
  approximating a random variable $F \in \L^2(\Omega, \cg_n, \P)$ as we know from
  \cite[Theorem 1.1.1]{nualart_98} that in such a case $\lim_{p \to \infty} C_{p,n}(F) =
  F$ in the $L^2-$sense. This result holds for the fixed value $n$. If $F$ were only
  $\cf_T-$measurable and  not $\cg_n-$measurable, we would need to impose that $F \in
  \D^{1,2}$ to obtain $\lim_{p \to \infty, n \to \infty} C_{p,n}(F) = F$. In this latter
  case, it is required to let $n$  go to infinity to recover $F$.
\end{rem}

\begin{prop}
  \label{prop:D-chaos-d}
 Let $F$ be a real valued random variable in $L^2(\Omega, \cf_T, \P)$ and let $k
 \in \{1, \dots, n\}$ and $p \ge 1$. For $t > t_k$, $D_t \E[C_{p,n}(F) | \cf_{t_k}] = 0$. 

 For all $t \in ]t_{r-1}, t_{r}]$ with $1 \le r \le k$, and $q=1,\dots,d$,
  \begin{align*}
    D^q_t \E[C_{p,n}(F) | \cf_{t_k}] = \frac{1}{\sqrt{h}} 
    \sum_{\alpha \in A_{p,n}^{\otimes d, k}, \alpha_r^q \ge 1}
    \lambda_\alpha \; \hh^{\otimes d}_{\alpha - \I(r,q)} (G_1, \dots, G_{n})
  \end{align*}
  where  $(\alpha - \I(r,q))^j_i = \alpha^j_i - \ind{j = q, i = r}$.
\end{prop}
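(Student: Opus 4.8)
The plan is to prove the multidimensional statement (Proposition~\ref{prop:D-chaos-d}) by combining the conditional expectation formula of Proposition~\ref{prop:Et-chaos-d} with a componentwise application of the Malliavin chain rule, exactly mirroring the one--dimensional argument of Proposition~\ref{prop:D-chaos}. First I would invoke Proposition~\ref{prop:Et-chaos-d} to write
\begin{align*}
  \E[C_{p,n}(F) | \cf_{t_k}] = \sum_{\alpha \in A^{\otimes d, k}_{p,n}} \lambda_\alpha \;
  \hh^{\otimes d}_\alpha (G_1, \dots, G_{n}),
\end{align*}
so that the summand is a product $\prod_{j=1}^d \hh_{\alpha^j}(G^j_1, \dots, G^j_n)$ depending only on the increments up to time $t_k$, since $\alpha^j_\l = 0$ for all $\l > k$ by the definition of $A^{\otimes d, k}_{p,n}$ in~\eqref{eq:Adk}.

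For the first assertion, I would observe that when $t > t_k$ the variables $G^j_i$ entering the surviving terms are all measurable with respect to increments strictly before $t$, hence the Malliavin derivative $D_t$ applied to a function of $(G^j_i)_{i \le k}$ vanishes; this gives $D_t \E[C_{p,n}(F) | \cf_{t_k}] = 0$ for $t > t_k$.

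For the main formula I would fix $t \in\, ]t_{r-1}, t_r]$ with $r \le k$ and a direction $q \in \{1, \dots, d\}$, and compute the $q$-th component $D^q_t$ of the Malliavin derivative. The key point is that $D^q_t G^j_i = \frac{1}{\sqrt h}\,\ind{j=q, i=r}$, so differentiating the product over $(i,j)$ only hits the single factor $H_{\alpha^q_r}(G^q_r)$. Applying the chain rule together with the recurrence $H_m' = H_{m-1}$ recalled after~\eqref{eq:hermite}, I obtain
\begin{align*}
  D^q_t \hh^{\otimes d}_\alpha (G_1, \dots, G_n) = \frac{1}{\sqrt h}\,\ind{\alpha^q_r \ge 1}\; \hh^{\otimes d}_{\alpha - \I(r,q)} (G_1, \dots, G_n),
\end{align*}
where the shift $\I(r,q)$ lowers precisely the $(r,q)$ index and leaves every other component untouched, matching the definition $(\alpha - \I(r,q))^j_i = \alpha^j_i - \ind{j=q, i=r}$. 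Summing over $\alpha \in A^{\otimes d,k}_{p,n}$ and restricting to the indices with $\alpha^q_r \ge 1$ yields the claimed expression.

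I expect the only delicate step to be the bookkeeping of the tensor structure: one must check that $D^q_t$ acts as a derivation on the product $\prod_{j=1}^d \hh_{\alpha^j}$ and selects the unique factor with $j=q$, $i=r$, using that the Brownian motions $B^1, \dots, B^d$ are independent so their increments $G^j_i$ are differentiated independently in each direction. Once this orthogonality of directions is recorded, the computation reduces componentwise to the one--dimensional calculation already carried out in the proof of Proposition~\ref{prop:D-chaos}, and no further analytic input is required.
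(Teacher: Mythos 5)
Your proposal is correct and follows essentially the same route as the paper: start from Proposition~\ref{prop:Et-chaos-d}, then apply the Malliavin chain rule so that $D^q_t$ for $t \in\, ]t_{r-1},t_r]$ hits only the factor $H_{\alpha^q_r}(G^q_r)$ and use $H_m' = H_{m-1}$ to produce the shift $\alpha - \I(r,q)$. Your explicit treatment of the case $t > t_k$ and of the identity $D^q_t G^j_i = \frac{1}{\sqrt h}\ind{j=q,\,i=r}$ only makes explicit what the paper leaves implicit.
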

In this multi-dimensional setting, the Malliavin derivative operator is actually a
gradient operator $D_t = (D^1_t, \dots, D^d_t)$.
\begin{proof}
  From Proposition~\ref{prop:Et-chaos-d}, we know that for all $1 \le k \le n$
  \begin{align*}
    \E[C_{p,n}(F) | \cf_{t_k}]  &= \sum_{\alpha \in A_{p,n}^{\otimes d, k}} \lambda_\alpha \;
    \prod_{j=1}^d \hh_{\alpha^j} (G^j_1, \dots, G^j_{n}).
  \end{align*}
  Let $r \le k$ and $t \in ]t_{r-1}, t_{r}]$. Let $1 \le q \le d$. The chain rule for the
  Malliavin derivative yields 
  \begin{align*}
    &D^q_t \E[C_{p,n}(F) | \cf_{t_k}] \\
    & = \sum_{\alpha \in
      A_{p,n}^{\otimes d, k}} \lambda_\alpha \;
    D^q_t \left(\prod_{j=1}^d \hh_{\alpha^j} (G^j_1, \dots, G^j_{n}) \right) \\
    & = \sum_{\alpha \in A_{p,n}^{\otimes d, k}} \lambda_\alpha \;
    \left(\prod_{j=1, j \ne q}^d \hh_{\alpha^j} (G^j_1, \dots, G^j_{n}) \right)
    D^q_t\left( \hh_{\alpha^q} (G^q_1, \dots, G^q_{n})\right) \\
    & = \inv{\sqrt{h}} \sum_{\alpha \in A_{p,n}^{\otimes d, k}} \lambda_\alpha \;
    \left(\prod_{j=1, j \ne q}^d \hh_{\alpha^j} (G^j_1, \dots, G^j_{n})\right)
    \hh_{\alpha^q - \I(r)} (G^q_1, \dots, G^q_{n})  \\
    & = \inv{\sqrt{h}} \sum_{\alpha \in A_{p,n}^{\otimes d, k}, \alpha^q_r \ge 1} \lambda_\alpha \;
    \hh_{\alpha - \I(r, q)} (G_1, \dots, G_{n}). \qedhere
  \end{align*}
\end{proof}
\begin{rem}\label{rem:Dzero}
  The conditional expectation preserves the nature of a chaos expansion.
  Similarly, the Malliavin derivative of a chaos expansion still writes as a
  chaos expansion and hence is a Hermite polynomial of Brownian increments.
  The roots of a non zero polynomial being a zero measure set and since the
  Brownian increments have a joined density, the Malliavin derivative of a chaos
  expansion is almost surely non zero as soon as one of the coefficients
  $\lambda_\alpha$ is non zero for $\alpha \in A_{p,n}^{\otimes d, k}$ such that
  $\alpha^j_r \ge 1$ for some $j \in \{1, \dots, d\}$.
\end{rem}

For $i, k \in \{1, \dots, n\}$, with $i<k$, we introduce the set $A_{p,n}^{\otimes d, i:k}$
defined as $A_{p,n}^{\otimes d, k} \setminus A_{p,n}^{\otimes d, i}$.
\begin{align}
  \label{eq:Aik}
  A_{p,n}^{\otimes d, i:k} 
  &= \Big\{ \alpha \in (\N^n)^d \;
    : \; \norm{\alpha}_1 \le p, \mbox{ and } \forall 1 \le j \le d, \;
    \forall \ell \notin \{i+1, \dots, k\}, \; \alpha^j_\ell = 0 \Big\}.
\end{align}

\section{Pricing American options using Wiener chaos expansion and sample average approximation}
\label{sec:wiener-saa}

In this section, we aim at approximating the dual price~\eqref{eq:prix_dual2} by a
tractable optimization problem. This involves two kinds of approximations: first,
approximate the space $L_0^2(\Omega,\cf_T, \P)$ by a finite dimensional vector space;
second, replace the expectation by a sample average approximation.

The dual price writes 
\begin{equation*}
  \inf_{X \in L_0^2(\Omega,\cf_T, \P)}
  \E\left[ \sup_{0 \le t \le T} (Z_t - \E[X|\cf_{t}] )
  \right].
\end{equation*}
In this optimization problem, we replace $X$ by its chaos expansion $C_{p,n}(X)$, which
has no constant term as $\E[X] = 0$ and we approximate the supremum by a discrete time
maximum. Then, we face a finite dimensional minimization problem to determine the optimal
solution with the subset $\cc_{p,n}$
\begin{equation}
  \label{eq:prix_dual_approx}
  \inf_{\lambda \in \R^{\spaceIndex}, \; \lambda_0 = 0}
  \E\left[ \max_{0 \le k \le n} (Z_{ t_k} - \E[C_{p,n}(\lambda)|\cf_{t_k}] )
  \right].
\end{equation}
In Section~\ref{sec:stoch}, we prove that this optimization problem is convex and has a
solution (see Proposition~\ref{prop:existence}) and converges to the price of the American
option (see Proposition~\ref{prop:cv}). Moreover, as the cost function is differentiable,
any minimizer is a zero of the gradient (see Proposition~\ref{prop:differentiability}),
which makes it easier to derive an algorithm.

To come up with a fully implementable algorithm, Section~\ref{sec:saa} presents the sample
average approximation of~\eqref{eq:prix_dual_approx}, which consists in replacing the
expectation by a Monte Carlo summation. We prove in Proposition~\ref{prop:cv_saa} that the
solution of the sample average approximation converges to the solution
of~\eqref{eq:prix_dual_approx} when the number of samples goes to infinity.

\subsection{A stochastic optimization approach}
\label{sec:stoch}

We fix $p \ge 1$ and  define the random functions $v_{p,n}(\cdot, \cdot; Z, G) :
\R^{A^{\otimes d}_{p,n}} \times \{0, \dots, n\}$  by
\begin{align*}
  v_{p,n} (\lambda, k; Z, G) & = 
  Z_{t_k} - \sum_{\alpha \in A^{\otimes d}_{p, n}} \lambda_\alpha
  \E\left[ \hh^{\otimes d}_{\alpha}
    \left( G_1, \dots, G_n \right) \Big| \cf_{t_k}\right],
\end{align*}
With the help of Proposition~\ref{prop:Et-chaos}, the random functions $v_{p,n}$
can be rewritten
\begin{align}
  \label{eq:cost-traj} v_{p,n} (\lambda, k, Z, G) & = Z_{t_k} - \sum_{\alpha \in
    A^{\otimes d, k}_{p,n}} \lambda_{\alpha} \hh^{\otimes d}_{\alpha} \left(
    G_1, \dots, G_n \right). 
\end{align}
We consider the cost function $V_{p, n} : \R^{A^{\otimes d}_{p,n}} \rightarrow \R$ defined by 
\begin{align}
  \label{eq:cost}
  V_{p,n} (\lambda) & = \E\left[ \max_{0 \le k \le n} v_{p,n} (\lambda, k; Z, G)
  \right]
\end{align}
and we approximate the solution of ~\eqref{eq:prix_dual2} by
\begin{equation}
  \label{eq:min_approx}
  \inf_{\lambda \in \R^{\spaceIndex}, \; \lambda_0 = 0}
  V_{p, n}(\lambda).
\end{equation}

We introduce the set of random indices for which the pathwise maximum is
attained
\begin{align}
  \label{eq:max-indices}
  \ci(\lambda, Z, G) = \left\{ 0 \le k \le n \; : \; v_{p,n} (\lambda, k; Z, G) = 
  \max_{\ell \le n} v_{p,n} (\lambda, \ell; Z, G) \right\}.
\end{align}

\begin{prop}
  \label{prop:existence}
  The minimization problem~\eqref{eq:min_approx} has at least one solution.
\end{prop}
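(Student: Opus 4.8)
The plan is to treat~\eqref{eq:min_approx} as a finite-dimensional minimisation over the subspace $W = \{\lambda \in \R^{\spaceIndex} : \lambda_0 = 0\}$ and to invoke the standard existence recipe for convex programs: show that $V_{p,n}$ is finite, convex and coercive on $W$, so that a continuous function with compact sublevel sets attains its infimum. Finiteness follows because, for each realisation, $\max_{0 \le k \le n} v_{p,n}(\lambda, k; Z, G)$ is bounded by $\sup_{t \le T}\abs{Z_t}$ plus a finite linear combination of the generalised Hermite polynomials $\hh^{\otimes d}_\alpha(G_1, \dots, G_n)$; since $\sup_{t\le T}\abs{Z_t} \in L^2$ and polynomials in Gaussian variables have moments of all orders, this bound is integrable. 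Convexity is immediate: for fixed $(Z,G)$ the map $\lambda \mapsto v_{p,n}(\lambda, k; Z, G)$ is affine, a finite maximum of affine maps is convex, and taking expectations preserves convexity. A finite convex function on a finite-dimensional space is automatically continuous, so only coercivity remains.

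For coercivity I would isolate the linear part $L_k(\lambda, G) = \sum_{\alpha \in A^{\otimes d, k}_{p,n}} \lambda_\alpha \hh^{\otimes d}_\alpha(G_1, \dots, G_n)$, so that $v_{p,n}(\lambda, k; Z, G) = Z_{t_k} - L_k(\lambda, G)$, and compare $V_{p,n}$ with the positively homogeneous function $g(\lambda) = \E[\max_{0 \le k \le n}(-L_k(\lambda, G))]$. Selecting pathwise the index that realises $\max_k(-L_k)$ and bounding the corresponding $Z_{t_k}$ from below by $-\sup_{t\le T}\abs{Z_t}$ yields the one-sided estimate $V_{p,n}(\lambda) \ge g(\lambda) - \E[\sup_{t\le T}\abs{Z_t}]$. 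Thus it suffices to prove that $g$ is bounded below by a positive multiple of $\norm{\lambda}$ on $W$.

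The heart of the argument is the strict positivity $g(\lambda) > 0$ for every $\lambda \in W \setminus \{0\}$, for which I would combine three facts. First, by~\eqref{eq:prop-H} each $\hh^{\otimes d}_\alpha$ with $\alpha \ne 0$ is centred, so $\E[L_k(\lambda, G)] = \lambda_0 = 0$ for every $k$. Second, $A^{\otimes d, 0}_{p,n} = \{0\}$, so on $W$ the term $k=0$ gives $L_0 \equiv 0$; hence $\min_k L_k \le 0$ pointwise and $g(\lambda) = \E[-\min_k L_k] \ge 0$. Equality $g(\lambda) = 0$ would force $\min_k L_k = 0$ almost surely, i.e. every $L_k \ge 0$ a.s.; together with $\E[L_k] = 0$ this forces $L_k = 0$ a.s. for all $k$, in particular $L_n(\lambda, \cdot) = \sum_{\alpha \in \spaceIndex,\, \alpha \ne 0} \lambda_\alpha \hh^{\otimes d}_\alpha = 0$ a.s. Third, the family $\{\hh^{\otimes d}_\alpha\}_{\alpha \in \spaceIndex}$ is orthogonal in $L^2$ by~\eqref{eq:prop-H}, hence linearly independent, so $L_n = 0$ a.s. forces $\lambda_\alpha = 0$ for all $\alpha \ne 0$; with $\lambda_0 = 0$ this gives $\lambda = 0$, a contradiction. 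Since $g$ is continuous and positively homogeneous of degree one, its minimum $m$ over the compact unit sphere of $W$ is then strictly positive, and $g(\lambda) \ge m\norm{\lambda}$ for all $\lambda \in W$.

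Putting the pieces together, $V_{p,n}(\lambda) \ge m\norm{\lambda} - \E[\sup_{t\le T}\abs{Z_t}] \to +\infty$ as $\norm{\lambda} \to \infty$, so the sublevel sets of $V_{p,n}$ on $W$ are closed and bounded, hence compact, and the continuous function attains its infimum there. I expect the main obstacle to be exactly the strict positivity of $g$: the zero-mean property alone only yields $g \ge 0$, and promoting this to $g > 0$ genuinely requires the linear independence of the truncated chaos basis, which is where the orthogonality relation~\eqref{eq:prop-H} is indispensable.
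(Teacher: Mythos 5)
Your proof is correct and follows essentially the same route as the paper: convexity as an expectation of a pointwise maximum of affine functions, and coercivity obtained by bounding $V_{p,n}$ below by a positively homogeneous function whose strict positivity off the origin comes from the orthogonality (hence linear independence) of the $\hh^{\otimes d}_\alpha$, combined with compactness of the unit sphere. The only cosmetic difference is in how the homogeneous lower bound is extracted — the paper uses $V_{p,n}(\lambda)\ge \E[(C_{p,n}(\lambda))_-]=\tfrac12\E[\abs{C_{p,n}(\lambda)}]$ via the $k=0$ and $k=n$ terms and $\E[C_{p,n}(\lambda)]=0$, whereas you keep all the $L_k$ and pay for it with the additive constant $\E[\sup_{t\le T}\abs{Z_t}]$ — but the substance is identical.
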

\begin{proof}
  As the supremum of linear functions is convex, the random function $\lambda
  \longmapsto \max_{k \le n} v_{p,n} (\lambda, t_k, Z, G)$ is almost surely
  convex. The convexity of $V_{p,n}$ ensues from the linearity of the expectation.

  Let us prove that $V_{p, n}(\lambda) \to \infty$ when $\abs{\lambda} \to \infty$. Note
  that $V_{p, n}(\lambda) \ge \E\left[ (C_{p,n}(\lambda))_- \right] \ge \inv{2} \E\left[
    \abs{C_{p,n}(\lambda)} \right]$, where we have used that $\abs{x} = 2 x_- + x$ and
  $\E[C_{p,n}(\lambda)] = 0$.
  \begin{align*}
    \E\left[ \abs{C_{p,n}(\lambda)} \right] = \abs{\lambda} \E\left[ \abs{C_{p,n}(\lambda
        / \abs{\lambda})} \right] \ge \abs{\lambda} \inf_{\mu \in \R^{A^{\otimes d}_{p,n}},
      \abs{\mu} = 1}  \E\left[ \abs{C_{p,n}(\mu)} \right].
  \end{align*}
  By a standard continuity argument, the infimum is attained.  Moreover, it is strictly
  positive as otherwise there would exist $\mu \in \R^{A^{\otimes d}_{p,n}}$ with
  $\abs{\mu} = 1$ s.t.  $\E\left[ \abs{C_{p,n}(\mu)} \right] = 0$.  Using the
  orthogonality of the family $\left(H^{\otimes d}_{\alpha}\right)_{\alpha \in A^{\otimes
      d}_{p,n}}$, we would immediately deduce that $\mu = 0$. Hence, we show that $V_{p,
    n}(\lambda) \to \infty$ when $\abs{\lambda} \to \infty$. The growth at infinity of
  $V_{p, n}$ combined with its convexity yields the existence of a solution to the
  minimization problem~\eqref{eq:min_approx}.
\end{proof}

Proposition~\ref{prop:existence} ensures the existence of $\lambda_{p,n}^\sharp$
solving~\eqref{eq:min_approx}, ie.
\begin{align}
  \label{eq:sol-approx}
  V_{p, n}(\lambda_{p,n}^\sharp) = \inf_{\lambda \; s.t. \; \lambda_0 = 0} V_{p, n}(\lambda).
\end{align}
Moreover, $\nabla V_{p,n}(\lambda_{p,n}^\sharp) = 0$. This characterization of an optimal
solution will be of prime importance to practically devise an algorithm.

\begin{prop}
  \label{prop:cv}
  The solution of the minimization problem~\eqref{eq:min_approx},
  $V_{p,n}(\lambda_{p,n}^\sharp)$, converges to $U_0$ when both $p$ and $n$ go to
  infinity.
\end{prop}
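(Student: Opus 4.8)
The plan is to bracket the optimal value between a discrete primal (Bermudan) value and the continuous dual value, and then to show that both brackets collapse to $U_0$. Writing
$\Phi_n(X) = \E[\max_{0\le k\le n}(Z_{t_k} - \E[X|\cf_{t_k}])]$ and
$\Phi(X) = \E[\sup_{0\le t\le T}(Z_t - \E[X|\cf_t])]$ for $X \in L^2_0(\Omega,\cf_T,\P)$, Proposition~\ref{prop:Et-chaos-d} together with the bijection $\lambda \mapsto C_{p,n}(\lambda)$ (valid by orthogonality of the $\hh^{\otimes d}_\alpha$) lets me identify the optimal value as $V_{p,n}(\lambda_{p,n}^\sharp) = \inf\{\Phi_n(X) : X \in \cc_{p,n}, \; \E[X]=0\}$, the constraint $\lambda_0 = 0$ being exactly $\E[X]=0$. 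I would then establish $\liminf V_{p,n}(\lambda_{p,n}^\sharp) \ge U_0$ and $\limsup V_{p,n}(\lambda_{p,n}^\sharp) \le U_0$ as $p,n \to \infty$.

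For the lower bound, observe that for any $X \in L^2_0$ the process $t \mapsto \E[X|\cf_t]$ is a martingale vanishing at $0$, so optional sampling gives $\E[\E[X|\cf_\tau]]=0$ for every grid-valued stopping time $\tau$. Hence $\E[Z_\tau] = \E[Z_\tau - \E[X|\cf_\tau]] \le \Phi_n(X)$, and taking the supremum over such $\tau$ yields $U^n_0 := \sup_\tau \E[Z_\tau] \le \Phi_n(X)$; in particular $U^n_0 \le V_{p,n}(\lambda_{p,n}^\sharp)$ for every $p$. The quantity $U^n_0$ is the Bermudan value on the grid, and since $Z$ has right-continuous paths with $\sup_t \abs{Z_t} \in L^2$ and the mesh tends to $0$, the Bermudan values converge to the American value, $U^n_0 \to U_0$. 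This gives $\liminf V_{p,n}(\lambda_{p,n}^\sharp) \ge U_0$.

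For the upper bound, I would first check that $\Phi$ is Lipschitz on $L^2_0$: using $\abs{\sup_t a_t - \sup_t b_t} \le \sup_t \abs{a_t - b_t}$ together with Doob's $L^2$ maximal inequality, $\abs{\Phi(X) - \Phi(Y)} \le \E[\sup_t \abs{\E[X-Y|\cf_t]}] \le 2\norm{X-Y}_{L^2}$. Since $\D^{1,2}\cap L^2_0$ is dense in $L^2_0$ and $U_0 = \inf_{X \in L^2_0}\Phi(X)$ by~\eqref{eq:prix_dual2}, continuity lets me pick, for a fixed $\epsilon>0$, some $X_\epsilon \in \D^{1,2}\cap L^2_0$ with $\Phi(X_\epsilon) \le U_0 + \epsilon$. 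Because $X_\epsilon \in \D^{1,2}$, Remark~\ref{rem:chaos-measurability} gives $C_{p,n}(X_\epsilon) \to X_\epsilon$ in $L^2$ as $p,n\to\infty$, and $C_{p,n}(X_\epsilon) \in \cc_{p,n}$ has zero mean, hence is admissible. Using $\Phi_n \le \Phi$ pointwise and the Lipschitz bound, $V_{p,n}(\lambda_{p,n}^\sharp) \le \Phi_n(C_{p,n}(X_\epsilon)) \le \Phi(C_{p,n}(X_\epsilon)) \to \Phi(X_\epsilon) \le U_0 + \epsilon$. Letting $\epsilon \downarrow 0$ closes the upper bound, and combining the two bounds yields $V_{p,n}(\lambda_{p,n}^\sharp) \to U_0$.

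The main obstacle is the discretization step $U^n_0 \to U_0$: this Bermudan-to-American convergence is where the right-continuity of $Z$ and the $L^2$-domination are genuinely used, and it must be invoked as a known result on the approximation of the Snell envelope. The second delicate point is that the chaos truncation has to be applied to an element of $\D^{1,2}$ so that Remark~\ref{rem:chaos-measurability} supplies $L^2$ convergence jointly in $p$ and $n$; the preliminary density reduction to $\D^{1,2}$ is precisely what makes this legitimate for a general near-optimal $X$.
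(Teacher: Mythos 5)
Your proof is correct, but it takes a genuinely different route from the paper's. The paper picks one explicit candidate, the chaos truncation $C_{p,n}(M^\s_T)$ of the terminal value of the Doob--Meyer martingale of the Snell envelope, and derives the quantitative sandwich $0 \le V_{p,n}(\lambda^\sharp_{p,n}) - U_0 \le 2\norm{M^\s_T - C_{p,n}(M^\s_T)}_2$ by the same Doob/Cauchy--Schwarz chain you use for your Lipschitz estimate, then invokes \cite{BriandLabart14} to make the right-hand side vanish; this buys an explicit error bound, which is reused later (e.g.\ in the Bermudan corollary). You instead replace the specific optimizer by an $\epsilon$-optimizer taken in $\D^{1,2}$ via a density/Lipschitz reduction, and you handle the lower bound through the Bermudan values $U_0^n$. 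Two observations. First, your lower bound is actually the more careful one: the paper asserts $U_0 \le V_{p,n}(\lambda^\sharp_{p,n})$ outright, whereas the cost function only sees the grid points, so only $U_0^n \le V_{p,n}(\lambda^\sharp_{p,n})$ is immediate; the Bermudan-to-American step you flag as the main obstacle is genuinely needed and is supplied precisely by the standing hypotheses on $Z$ (right-continuity and $\sup_t\abs{Z_t}\in L^2$, via $\tau_n=\min\{t_k\ge\tau\}$ and dominated convergence). Second, your detour through $\D^{1,2}$ sidesteps the question of whether $M^\s_T$ itself is regular enough for the joint $(p,n)$ chaos convergence of Remark~\ref{rem:chaos-measurability}, a point the paper settles only by appeal to the cited lemmas. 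The price you pay is that your argument is purely qualitative and yields no rate.
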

\begin{proof}
  We introduce the truncated chaos expansion of $M^\s_T$ and denote its coefficients by
  $\lambda^\s_{p,n}$, ie. $C_{p,n}(M_T^\s) = C_{p,n}(\lambda^\s_{p,n})$. Clearly, $U_0 \le
  V_{p, n}(\lambda_{p,n}^\sharp) \le V_{p, n}(\lambda^\s_{p,n})$. Then, we obtain the
  following result
  \begin{align}
    \label{eq:error_bound}
    0 \le V_{p,n}(\lambda_{p,n}^\sharp)  - U_0  & \le V_{p,n}(\lambda_{p,n}^\s)  - U_0 \nonumber\\
    & = \E\left[ \max_{k} (Z_{t_k} - \E[C_{p,n}(\lambda_{p,n}^\s)
      | \cf_{t_k}]) - \max_{k} (Z_{t_k} - M^\s_{t_k}) \right] \nonumber\\
    & \le \E\left[ \max_{k} \abs{M^\s_{t_k} -
        \E[C_{p,n}(\lambda_{p,n}^\s) | \cf_{t_k}]} \right] \nonumber\\
    & \le \E\left[ \max_{k} \E\left[\abs{M^\s_{T} -
          C_{p,n}(\lambda_{p,n}^\s)} | \cf_{t_k}\right] \right] \nonumber\\
    & \le \sqrt{\E\left[ \max_{k} \E\left[\abs{M^\s_{T} -
            C_{p,n}(\lambda_{p,n}^\s)} | \cf_{t_k}\right]^2 \right]} \nonumber\\
    & \le 2 \norm{M^\s_{T} - C_{p,n}(M_T^\s)}_2
  \end{align}
  where the last upper--bound ensues from Doob's inequality. Note that this bound does not
  depend on $\lambda_{p,n}^\sharp$. The convergence result when $p, n$ go to infinity
  ensues from \cite[Lemma 2, Lemma 19]{BriandLabart14}.
\end{proof}

\begin{rem}
  Note that if in the series of inequalities~\eqref{eq:error_bound}, we had dropped the
  second one, we would have come up in the end with
  $0 \le V_{p,n}(\lambda_{p,n}^\sharp)  - U_0  \le 2 \norm{M^\s_{T} -
    C_{p,n}(\lambda_{p,n}^\sharp)}_2$, which, by definition of the chaos expansion, is
  larger than $2 \norm{M^\s_{T} - C_{p,n}(M_T^\s)}_2$.
\end{rem}

\begin{cor}
  Consider the Bermudean option with exercising dates $t_0,\dots, t_n$ and with discounted
  payoff $(Z_{t_k})_k$ assumed to be $\cg-$adapted. Then, $V_{p,n}(\lambda_{p,n}^\sharp)$
  converges to the price of the Bermudean option when $p$ goes to infinity.
\end{cor}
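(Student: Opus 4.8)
The plan is to mirror the proof of Proposition~\ref{prop:cv} almost verbatim, the only genuine change occurring in the very last, convergence step. First I would record the discrete-time dual representation of the Bermudan price: since the exercise dates are restricted to the grid $t_0, \dots, t_n$, Rogers' identity reads $U_0 = \inf_M \E\left[ \max_{0 \le k \le n} (Z_{t_k} - M_{t_k}) \right]$, where the infimum now runs over discrete-time $(\cf_{t_k})_k$-martingales vanishing at zero. For every admissible $\lambda$ with $\lambda_0 = 0$, the sequence $(\E[C_{p,n}(\lambda) \mid \cf_{t_k}])_k$ is exactly such a martingale (it vanishes at $k=0$ because $\cf_{t_0}$ is trivial and $\E[C_{p,n}(\lambda)] = \lambda_0 = 0$), so $U_0 \le V_{p,n}(\lambda_{p,n}^\sharp)$; taking $\lambda_{p,n}^\s$ to be the coefficients of $C_{p,n}(M^\s_{t_n})$ gives the companion bound $V_{p,n}(\lambda_{p,n}^\sharp) \le V_{p,n}(\lambda_{p,n}^\s)$. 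The identical chain of inequalities as in~\eqref{eq:error_bound} — using $\abs{\max_k a_k - \max_k b_k} \le \max_k \abs{a_k - b_k}$, the martingale property $M^\s_{t_k} = \E[M^\s_{t_n} \mid \cf_{t_k}]$, conditional Jensen, and Doob's $L^2$ maximal inequality — then yields the error bound $0 \le V_{p,n}(\lambda_{p,n}^\sharp) - U_0 \le 2\,\norm{M^\s_{t_n} - C_{p,n}(M^\s_{t_n})}_2$, with $n$ now fixed.

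The crucial new ingredient, and the step I expect to be the main obstacle, is to show that the terminal value $M^\s_{t_n}$ of the optimal martingale is $\cg_n$-measurable. I would argue this by backward induction on the discrete-time Snell envelope, which satisfies $U_{t_n} = Z_{t_n}$ and $U_{t_k} = \max\bigl(Z_{t_k}, \E[U_{t_{k+1}} \mid \cf_{t_k}]\bigr)$. The base case holds since $Z_{t_n}$ is $\cg_n$-measurable by hypothesis. For the induction step, if $U_{t_{k+1}}$ is a Borel function of $(G_1, \dots, G_{k+1})$, then because $(G_1,\dots,G_k)$ are $\cg_k \subset \cf_{t_k}$-measurable while $G_{k+1}$ is independent of $\cf_{t_k}$, the conditional expectation $\E[U_{t_{k+1}} \mid \cf_{t_k}]$ is obtained by integrating out $G_{k+1}$ against its standard Gaussian density and is therefore $\cg_k$-measurable; combined with the $\cg_k$-measurability of $Z_{t_k}$, this makes $U_{t_k}$ $\cg_k$-measurable. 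Hence the whole envelope, and thus the martingale part $M^\s$ of its discrete-time Doob decomposition, is $\cg$-adapted; in particular $M^\s_{t_n} \in L^2(\Omega, \cg_n, \P)$.

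Finally I would conclude with Remark~\ref{rem:chaos-measurability}: for the fixed grid of size $n$, any $\cg_n$-measurable square integrable random variable $F$ satisfies $C_{p,n}(F) \to F$ in $L^2$ as $p \to \infty$ (this is precisely the situation of \cite[Theorem 1.1.1]{nualart_98}). Applying this to $F = M^\s_{t_n}$ shows that the right-hand side of the error bound tends to $0$ as $p \to \infty$ alone, which gives the claim. The contrast with Proposition~\ref{prop:cv} is exactly that no time-discretization error survives here: since the Bermudan exercise set coincides with the grid, the discrete maximum defining $V_{p,n}$ is not an approximation of a continuous supremum, so letting $n \to \infty$ is unnecessary and $\cg_n$-measurability of $M^\s_{t_n}$ — rather than membership in $\D^{1,2}$ — suffices.
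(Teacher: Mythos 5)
Your proposal follows essentially the same route as the paper: use the discrete-time dual representation of the Bermudan price, run the chain of inequalities from~\eqref{eq:error_bound} with the terminal value $\hat M_n$ of the discrete Doob decomposition of the Bermudan Snell envelope in place of $M^\s_T$, and invoke Remark~\ref{rem:chaos-measurability} to conclude that $\norm{\hat M_n - C_{p,n}(\hat M_n)}_2 \to 0$ as $p \to \infty$ for fixed $n$. The only difference is that you spell out, via backward induction on the Snell envelope, why $\hat M_n$ is $\cg_n$-measurable — a fact the paper simply asserts when stating that $\hat M$ is a $(\cg_k)_k$-martingale — so your argument is correct and, if anything, slightly more complete.
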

\begin{proof}
  Let $\hat U_k$ be the price at time$-t_k$ of the Bermudean option. The sequence $(\hat
  U_k)_k$ is a supermartingale admitting the Doob--Meyer decomposition $\hat U_k = \hat U_0 +
  \hat M_k - \hat A_k$ where $\hat M$ is a square integrable $(\cg_k)_k-$martingale and $\hat
  A$ a predictable increasing process for the filtration $\cg$.
  The price at time$-0$ of the Bermudean option also writes
  \begin{equation*}
    \hat U_0 = \inf_{ X \in L_0^2(\Omega,\cg_n, \P)}
    \E\left[ \max_{0 \le k \le n} (Z_{t_k} - \E[X|\cg_{k}] ) \right].
  \end{equation*}
  Clearly, $C_{p,n}(\lambda) \in L^2(\Omega,\cg_n, \P)$ for any $\lambda$ such that
  $\lambda_0 = 0$ and moreover $V_{p,n}(\lambda_{p,n}^\sharp) \ge \hat U_0$. Then, by
  reproducing the steps in~\eqref{eq:error_bound}, we get
  \begin{align*}
    0 \le V_{p,n}(\lambda_{p,n}^\sharp) - \hat U_0 \le 2 \norm{\hat M_n - C_{p,n}(\hat
      M_n)}_2.
  \end{align*}
  We deduce from Remark~\ref{rem:chaos-measurability} that this upper--bound goes to zero
  as $p$ tends to infinity.
\end{proof}
Most convex optimization algorithms mainly rely on the gradient of the cost function. We
end this section by proving that $V_{p,n}$ is almost everywhere differentiable, which
implies that $\nabla V_{p,n}(\lambda_{p,n}^\sharp) = 0$.
\begin{prop}
  \label{prop:differentiability} Let $p \ge 1$. Assume that 
  \begin{multline}
    \label{eq:Dt_density}
    \forall 1 \le r \le k \le n, \; \forall F \; \cf_{t_k}-\text{measurable}, \; F \in
    \cc_{p-1,n}, \; F \ne 0, \;
    \exists \; q \in \{1, \dots, d\} \mbox{ s.t. }  \\
    \quad \P\left(\forall t \in ]t_{r-1},t_r], \; D_t^q Z_{t_k} + F = 0
      \cond Z_{t_k} > 0\right) = 0.
  \end{multline}
  Then,  the function $V_{p,n}$ is differentiable at all points $\lambda \in
  \R^{A^{\otimes d}_{p, n}}$ with no zero component and its gradient $\nabla V_{p,n}$ is
  given by
  \begin{align*}
    \nabla V_{p,n}(\lambda) = \E\left[ \E\left[ \hh^{\otimes d} (G_1, \dots,
        G_n)  \cond \cf_{t_i}\right]_{|\{i\} = \ci(\lambda, Z, G) } \right].
  \end{align*}
\end{prop}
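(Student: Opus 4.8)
The plan is to treat $V_{p,n}$ as the expectation of a pathwise maximum of affine functions and to combine a pointwise differentiation of that maximum with an interchange of $\E$ and $\nabla$, the whole difficulty being concentrated in showing that the maximiser in~\eqref{eq:max-indices} is almost surely unique. Fix $Z,G$. Each map $\lambda\mapsto v_{p,n}(\lambda,k;Z,G)$ is affine, so $g(\lambda):=\max_{0\le k\le n}v_{p,n}(\lambda,k;Z,G)$ is convex and piecewise affine, hence differentiable at $\lambda$ exactly when $\ci(\lambda,Z,G)$ is a singleton $\{i\}$; there its gradient is that of the active piece, whose component along $\alpha$ is $-\hh^{\otimes d}_\alpha(G_1,\dots,G_n)\ind{\alpha\in A^{\otimes d, i}_{p,n}}$, that is $-\E[\hh^{\otimes d}_\alpha(G_1,\dots,G_n)\cond\cf_{t_i}]$ by Proposition~\ref{prop:Et-chaos-d}. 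Up to the sign carried by $v_{p,n}$ this is exactly the announced integrand, so it remains to (i) justify differentiating under the expectation and (ii) prove that $\ci(\lambda,Z,G)$ is a.s. a singleton whenever $\lambda$ has no vanishing component.

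The interchange is routine. For a direction $h$, convexity makes $s\mapsto s^{-1}(g(\lambda+sh)-g(\lambda))$ monotone, and for $\abs{s}\le 1$ it is squeezed between $g(\lambda)-g(\lambda-h)$ and $g(\lambda+h)-g(\lambda)$; both are integrable because $g(\mu)\le\sup_{t\le T}\abs{Z_t}+\sum_\alpha\abs{\mu_\alpha}\,\abs{\hh^{\otimes d}_\alpha(G_1,\dots,G_n)}$, with $\sup_{t\le T}\abs{Z_t}\in L^2$ and each $\hh^{\otimes d}_\alpha(G_1,\dots,G_n)$ in every $L^q$. Dominated convergence then gives $V'_{p,n}(\lambda;h)=\E[g'(\lambda,\cdot;h)]$, and once the integrand is a.s. differentiable at $\lambda$ this is linear in $h$; a finite-dimensional convex function with a linear directional derivative is differentiable, which yields both the differentiability of $V_{p,n}$ and the stated formula.

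The crux is the a.s. uniqueness of the maximiser. Since $\{\abs{\ci}\ge 2\}\subseteq\bigcup_{k<k'}\{v_{p,n}(\lambda,k;Z,G)=v_{p,n}(\lambda,k';Z,G)\}$, it suffices to show that $D:=v_{p,n}(\lambda,k;Z,G)-v_{p,n}(\lambda,k';Z,G)$ has no atom at $0$, for each $k<k'$. Fixing an interval $]t_{r-1},t_r]$ with $k<r\le k'$ and any coordinate $q$, the term $v_{p,n}(\lambda,k;Z,G)$ is $\cf_{t_k}$-measurable so $D^q_t v_{p,n}(\lambda,k;Z,G)=0$ there, whence $D^q_t D=-(D^q_t Z_{t_{k'}}+F_q)$ with $F_q:=-D^q_t\E[C_{p,n}(\lambda)\cond\cf_{t_{k'}}]\in\cc_{p-1,n}$, $\cf_{t_{k'}}$-measurable. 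By Proposition~\ref{prop:D-chaos-d} the chaos $F_q$ has a non-vanishing coefficient as soon as $\lambda$ has no zero component, so $F_q\ne 0$; hypothesis~\eqref{eq:Dt_density}, applied at index $k'$, then furnishes a coordinate for which $D^q_t Z_{t_{k'}}+F_q\not\equiv 0$ on $]t_{r-1},t_r]$, a.s. on $\{Z_{t_{k'}}>0\}$. Hence $D D\ne 0$ in $\ltwo$ a.s. on $\{Z_{t_{k'}}>0\}$, and the local version of the Bouleau--Hirsch criterion shows that the law of $D$ restricted to $\{Z_{t_{k'}}>0\}$ is absolutely continuous, giving $\P(\{D=0\}\cap\{Z_{t_{k'}}>0\})=0$.

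It remains to control the event $\{Z_{t_{k'}}=0\}$, and this is where I expect the genuine work. One argues by cases: if $Z_{t_k}=0$ as well then $D$ is a bona fide element of $\cc_{p,n}$, i.e. a non-trivial polynomial in the Brownian increments, whose zero set is negligible by Remark~\ref{rem:Dzero}; if instead $Z_{t_k}>0$ one repeats the Malliavin computation on an interval $]t_{r-1},t_r]$ with $r\le k$, where $D^q_t Z_{t_{k'}}$ vanishes on the flat region $\{Z_{t_{k'}}=0\}$ and~\eqref{eq:Dt_density} is invoked at index $k$ under $\{Z_{t_k}>0\}$. The delicate points are precisely this bookkeeping on $\{Z_{t_{k'}}=0\}$, the need to match the coordinate $q$ selected by~\eqref{eq:Dt_density} with the coordinate whose chaos $F_q$ enters $D^q_t D$, and the conditional form of the Bouleau--Hirsch absolute-continuity criterion; everything else is the soft convex-analysis and domination argument above.
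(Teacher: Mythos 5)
Your overall strategy coincides with the paper's: reduce differentiability of $V_{p,n}$ to the a.s.\ uniqueness of the argmax, and get that uniqueness from an absolute-continuity criterion applied to $X:=v_{p,n}(\lambda,k;Z,G)-v_{p,n}(\lambda,k';Z,G)$ by lower-bounding $\norm{DX}_{\ltwo}$ on the intervals $]t_{r-1},t_r]$ with $k<r\le k'$. Your treatment of the soft part (monotone difference quotients and dominated convergence instead of the paper's appeal to the interchange theorem for subdifferentials of integral functionals) is a legitimate alternative and costs nothing. The main case $\{Z_{t_{k'}}>0\}$ is handled exactly as in the paper.

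The gap is in your treatment of $\{Z_{t_{k'}}=0\}$. Your second sub-case ($Z_{t_{k'}}=0$, $Z_{t_k}>0$) proposes to move to an interval $]t_{r-1},t_r]$ with $r\le k$ and invoke \eqref{eq:Dt_density} at index $k$. This fails: the chaos part of $X$ is $\sum_{\alpha\in A_{p,n}^{\otimes d, k:k'}}\lambda_\alpha \hh^{\otimes d}_\alpha$, and every $\alpha$ in $A_{p,n}^{\otimes d, k:k'}$ has $\alpha^q_\ell=0$ for all $\ell\le k$, so by Proposition~\ref{prop:D-chaos-d} its Malliavin derivative vanishes identically on $]t_{r-1},t_r]$ for $r\le k$. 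You are then left with $D^q_tX=D^q_tZ_{t_k}$ (after locality kills $D^q_tZ_{t_{k'}}$), i.e.\ you would need to apply the hypothesis with $F=0$, which it explicitly excludes; nothing in the assumptions prevents $D^q_tZ_{t_k}$ from vanishing on a positive-probability subset of $\{Z_{t_k}>0\}$. The case split on $Z_{t_k}$ is in fact unnecessary: on $\{Z_{t_{k'}}=0\}$ stay on the intervals $k<r\le k'$, where locality of $D$ gives $D^q_t Z_{t_{k'}}=0$, so $D^q_tX$ reduces to the pure chaos derivative $\frac{1}{\sqrt{h}}\sum_{\alpha\in A_{p,n}^{\otimes d,k:k'},\,\alpha^q_r\ge1}\lambda_\alpha\hh^{\otimes d}_{\alpha-\I(r,q)}$, which is a nonzero element of $\cc_{p-1,n}$ (since $\lambda$ has no zero component and $\I(r,q)\in A_{p,n}^{\otimes d,k:k'}$) and hence a.s.\ nonzero by Remark~\ref{rem:Dzero}; this covers the whole event $\{Z_{t_{k'}}=0\}$ at once, which is exactly what the paper does. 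Your first sub-case (both payoffs zero, argue on the random variable $X$ itself) is valid but redundant once this is in place.
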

We refer the reader to section~\ref{sec:density_condition} for a detailed
discussion on which kinds of models and payoffs satisfy~\eqref{eq:Dt_density}.
\begin{proof}
  We already know that the function $V_{p,n}$ is convex.   Moreover, for all $Z$ and $G$,
  the function $\lambda \longmapsto \max_{k \le n} v_{p,n} (\lambda, k, Z, G)$ has a
  subdifferential given by
  \begin{align*}
   \left\{ \sum_{i \in \ci(\lambda, Z, G)}
     \beta_i \E[\hh^{\otimes d} (G_1, \dots, G_n)| \cf_{t_i}] \; : \; \beta_i \ge 0, \;
     \beta_i \; \cf_T-measurable \;
    \mbox{s.t. } \sum_{i \in \ci(\lambda, Z, G)} \beta_i = 1
    \right\}
  \end{align*}
  Then, the expression of the subdifferential $\partial V_{p,n}(\lambda)$ ensues
  from~\cite{MR0329725}. 
  \begin{align*}
    \partial V_{p,n}(\lambda) =  \left\{ \E\left[\sum_{i \in \ci(\lambda, Z, G)}
      \beta_i \E[\hh^{\otimes d} (G_1, \dots, G_n)| \cf_{t_i}] \right] \; : \; \beta_i \ge 0, \;
     \beta_i \; \cf_T-meas., \; \sum_{i} \beta_i = 1 \right\}.
  \end{align*}

  It is sufficient to prove for any $\lambda$ with no zero component, the set
  $\ci(\lambda, Z, G)$ is almost surely reduced to a single value as in this case the
  subdifferential $\partial V_{p,n}(\lambda)$ contains a unique element, which is then the
  gradient.

  By the equality
  \begin{align*}
    \left\{ \exists t_i \ne t_k \; ; \; v_{p,n} (\lambda, i; Z, G)  = v_{p,n}
    (\lambda, k; Z, G)  \right\} = \bigcup_{i <  k \le n} 
    \left\{ v_{p,n} (\lambda, i; Z, G)  = v_{p,n} (\lambda, k; Z, G)  \right\},
  \end{align*}
  it is sufficient to prove that for any $i < k \le n$, $\P( v_{p,n} (\lambda, i, Z, G)  =
  v_{p,n} (\lambda, k, Z, G) ) = 0$. Fix $i < k$ and set $X_\lambda = v_{p,n} (\lambda, k,
  Z, G)  - v_{p,n} (\lambda, i, Z, G)$.  According to \cite[Theorem 2.1.3]{nualart_98}, it
  is sufficient to prove that $\norm{DX_\lambda}_{\ltwo} > 0$ .a.s to ensure that
  $X_\lambda$ is absolutely continuous with respect to the Lebesgue measure on $\R$ and
  hence is almost surely non zero. Since
  $\norm{DX_\lambda}_{\ltwo}^2 = \int_0^T |D_t X_\lambda |^2 dt \ge \int_{t_i}^{t_k} |D_t
  X_\lambda |^2 dt \ge \int_{t_i}^{t_k} (D^q_t X_\lambda )^2 dt$ for any $q \in \{1,
    \dots, d\}$.

  For $t \in [0,T]$, and $1 \le q \le d$,  the Malliavin derivative of $X_\lambda$ is given by
  \begin{align*}
    D^q_t X_\lambda & = D^q_t (Z_{t_k} - Z_{t_{i}}) -
    D^q_t\left(\sum_{\alpha \in A_{p;n}^{\otimes d,k}} \lambda_{\alpha}
      \hh^{\otimes d}_{\alpha} \left( G_1, \dots, G_n \right) - \sum_{\alpha \in
        A_{p;n}^{\otimes d, i}} \lambda_{\alpha} \hh^{\otimes d}_{\alpha} \left(
        G_1, \dots, G_n \right)\right) \\
    & = D^q_t (Z_{t_k} - Z_{t_{i}}) -
    D^q_t \left(  \sum_{\alpha \in A_{p;n}^{\otimes d, i:k}} \lambda_{\alpha}
      \hh^{\otimes d}_{\alpha} \left( G_1, \dots, G_n \right) \right).
  \end{align*}
   Clearly, w.p.1. $D^q_t X_\lambda = 0$ for all $t > t_k$. Hence,
  \[
    \left\{ D^q_t X_\lambda = 0 \; \forall t \in [0,T] \, a.e. \right\} \subset
   \bigcap_{i < r \le k} \left\{ D^q_t X_\lambda = 0 \; \forall t \in [t_{r-1},t_r] \, a.e. \right\}.
  \]
  From Proposition~\ref{prop:D-chaos}, we can deduce that for $i < r \le k$, and $t \in
  ]t_{r-1}, t_r]$
  \begin{align*}
    D^q_t X_\lambda = D^q_t (Z_{t_k})
    + \frac{1}{\sqrt{h}} \sum_{\alpha \in
      A_{p;n}^{\otimes d, i:k}, \alpha^q_r \ge 1} \lambda_{\alpha} \hh^{\otimes d}_{\alpha
      - \I(r, q)} \left( G_1, \dots, G_n \right). 
  \end{align*}
  Using the locality of the operator $D$, we know that a.s $D^q_t (\phi(S_{t_k})) = 0$ for
  all $t \in  ]t_{r-1}, t_r]$ on the set $\{\phi(S_{t_k}) = 0\}$. Hence, we can write
  \begin{align*}
     \P\left( \forall t \in  ]t_{r-1}, t_r], \; D_t X_\lambda = 0 \right) & =
    \P\left( \frac{1}{\sqrt{h}} \sum_{\alpha \in
      A_{p;n}^{\otimes d, i:k}, \alpha^q_r \ge 1} \lambda_{\alpha} \hh^{\otimes d}_{\alpha
      - \I(r, q)} \left( G_1, \dots, G_n \right) = 0, \; \phi(S_{t_k}) = 0
    \right)  \\
    & \quad + 
   \P\left( \forall t \in  ]t_{r-1}, t_r], \; D_t X_\lambda = 0 \cond Z_{t_k} > 0
    \right)  \P(Z_{t_k} > 0).
  \end{align*}
  As all the components of $\lambda$ are non zero,  $\frac{1}{\sqrt{h}} \sum_{\alpha \in
    A_{p;n}^{\otimes d, i:k}, \alpha^q_r \ge 1} \lambda_{\alpha} \hh^{\otimes d}_{\alpha
    - \I(r, q)} \left( G_1, \dots, G_n \right)$ is either a non zero constant if $p=1$
  or it has an absolutely continuous density thanks to Remark~\ref{rem:Dzero}. In both
  cases, it is a non zero element of $\cc_{p-1,n}$ and
  \[
    \P\left( \frac{1}{\sqrt{h}} \sum_{\alpha \in
        A_{p;n}^{\otimes d, i:k}, \alpha^q_r \ge 1} \lambda_{\alpha} \hh^{\otimes d}_{\alpha
        - \I(r, q)} \left( G_1, \dots, G_n \right) = 0  \right)  = 0.
  \]
  To treat the other term, we pick a $q \in \{1, \dots, d\}$ as in the assumption of the
  proposition (see \eqref{eq:Dt_density}) and it yields that
  \[
    \P\left( \forall t \in  ]t_{r-1}, t_r], \; D_t^q X_\lambda = 0 \cond Z_{t_k} > 0
    \right) = 0.
  \]
  Hence, we deduce that 
  \[
    \norm{DX_\lambda}_{\ltwo}^2 \ge \int_{t_{r-1}}^{t_r} (D^q_t X_\lambda )^2 dt > 0 \; a.s.
  \]
  which concludes the proof.
\end{proof}

\subsection{The Sample Average Approximation point of view}
\label{sec:saa}

From~\eqref{eq:error_bound}, we can approximate $U_0$ by solving the
minimization problem~\eqref{eq:min_approx}, which admits at least one solution
$\lambda_{p,n}^\sharp$, ie.
\begin{align*}
  V_{p, n}(\lambda_{p,n}^\sharp) = \inf_{\lambda \in \spaceIndex, \; \lambda_0 = 0} V_{p, n}(\lambda)
\end{align*}
where $V_{p,n}$ defined by~\eqref{eq:cost} is an expectation, which is barely tractable.
To practically solve such a problem, two differently approaches are commonly used. Either,
one uses a stochastic algorithm or one replaces the expectation by a sample average
approximation. In this work, we target large problems, which puts scalability as a primary
requirement. The intrinsic sequential nature of stochastic algorithms has led us to
prefer the sample average approximation approach. Moreover, we are more interested in the
value function at the minimum rather than in its minimizer and unlike stochastic
algorithm, standard optimization algorithms provide both at once.\\

We introduce the sample average approximation of $V_{p,n}$ defined by 
\begin{align*}
  V_{p,n}^m(\lambda) = \inv{m} \sum_{i=1}^m \max_{0 \le k \le n} v_{p,n}(\lambda, k;
  Z^{(i)}, G^{(i)})
\end{align*}
where $(Z^{(i)}, G^{(i)})_{1 \le i \le m}$ are i.i.d samples from the distribution of $(Z,
G)$. 

For large enough $m$, $V^m_{p,n}$ inherits from the smoothness of $V_{p,n}$ and is in
particular convex and a.s. differentiable at any point with no zero component. Then, we
easily deduce from Proposition~\ref{prop:existence} that there exits $\lambda^m_{p,n}$
such that
\begin{align*}
  V_{p,n}^m(\lambda^m_{p,n}) = \inf_{\lambda \in \R^{\spaceIndex}, \; \lambda_0 = 0} V_{p,n}^m(\lambda)  
\end{align*}
and moreover $\nabla V^m_{p, n}(\lambda^m_{p,n}) = 0$.  The main difficulty in studying
the convergence of $V_{p,n}^m(\lambda^m_{p,n})$ when $m$ goes to infinity comes from the
non compactness of the set $\R^{\spaceIndex}$. To circumvent this problem, we adapt  to
non strictly convex problems the technique used in \cite{jour:lelo:09}.

\begin{prop}
  \label{prop:cv_saa} The sequence $V_{p,n}^m(\lambda^m_{p,n})$ converges a.s. to
  $V_{p,n}(\lambda_{p,n}^\sharp)$ when $m \to \infty$. Moreover, the distance between
  $\lambda^m_{p,n}$ and the convex set of minimizers in~\eqref{eq:min_approx} converges to
  zero as $m$ goes to infinity.
\end{prop}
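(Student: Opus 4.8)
The plan is to run the classical epi-convergence scheme for sample average approximation, adapted as in \cite{jour:lelo:09} to cope both with the non-compactness of $\R^{\spaceIndex}$ and with the fact that the minimizer of~\eqref{eq:min_approx} need not be unique (Proposition~\ref{prop:existence} only gives a nonempty convex set of minimizers). First I would fix a countable dense subset $D$ of $\{\lambda \in \R^{\spaceIndex} : \lambda_0 = 0\}$, say the points with rational coordinates. For each fixed $\lambda$ the random variable $\max_{0 \le k \le n} v_{p,n}(\lambda, k; Z, G)$ is integrable: it is bounded by $\sup_{t \le T}\abs{Z_t}$ plus $\max_k \abs{\E[C_{p,n}(\lambda)|\cf_{t_k}]}$, the first term being in $L^2$ by assumption and the second being the running maximum of an $L^2$-bounded discrete martingale, hence in $L^2$ by Doob. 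The strong law of large numbers then gives $V_{p,n}^m(\lambda) \to V_{p,n}(\lambda)$ a.s., and intersecting the countably many null sets produces a single event of probability one on which this holds simultaneously for every $\lambda \in D$.

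On that event, since $V_{p,n}^m$ and $V_{p,n}$ are finite convex functions on $\R^{\spaceIndex}$, pointwise convergence on the dense set $D$ upgrades automatically to uniform convergence on every compact subset, by the classical convergence theorem for convex functions. This is the workhorse that lets me transfer estimates from $V_{p,n}$ to $V_{p,n}^m$, but only once I know the relevant minimizers lie in a fixed compact set; securing that is the real difficulty.

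The hard part is therefore showing that $\lambda^m_{p,n}$ stays bounded a.s.\ for large $m$, which is where the adaptation of \cite{jour:lelo:09} is genuinely needed. I would revisit the coercivity argument of Proposition~\ref{prop:existence}, whose key quantity is $c := \inf_{\abs{\mu}=1}\E[\abs{C_{p,n}(\mu)}] > 0$. The empirical analogue of the lower bound $V_{p,n}(\lambda) \ge \inv{2}\E[\abs{C_{p,n}(\lambda)}]$ reads, using $\abs{x} = 2x_- + x$ on each sample, $V_{p,n}^m(\lambda) \ge \frac{\abs{\lambda}}{2}\big(\inv{m}\sum_{i=1}^m \abs{C_{p,n}(\mu)} - \inv{m}\sum_{i=1}^m C_{p,n}(\mu)\big)$ for $\lambda = \abs{\lambda}\mu$, $\abs{\mu}=1$, the averages being evaluated on the samples $G^{(i)}$. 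I would then prove the two uniform laws of large numbers $\sup_{\abs{\mu}=1}\abs{\inv{m}\sum_{i=1}^m \abs{C_{p,n}(\mu)} - \E[\abs{C_{p,n}(\mu)}]} \to 0$ and $\sup_{\abs{\mu}=1}\abs{\inv{m}\sum_{i=1}^m C_{p,n}(\mu)} \to 0$ a.s.; both reduce to the pointwise SLLN already established, because $\mu \mapsto C_{p,n}(\mu)$ is linear and the unit sphere is compact, so a finite cover together with Lipschitz control in $\mu$ suffices. These give, on a full event and for all large $m$, a uniform coercive bound $V_{p,n}^m(\lambda) \ge c'\abs{\lambda}$ with $c' > 0$ independent of $m$. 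Since $V_{p,n}^m(\lambda^m_{p,n}) \le V_{p,n}^m(\lambda_{p,n}^\sharp) \to V_{p,n}(\lambda_{p,n}^\sharp) < \infty$, the minimizers $\lambda^m_{p,n}$ eventually lie in a fixed ball $K$.

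With $\{\lambda^m_{p,n}\}\subset K$ for large $m$, I would conclude using uniform convergence on $K$. The inequality $V_{p,n}^m(\lambda^m_{p,n}) \le V_{p,n}^m(\lambda_{p,n}^\sharp) \to V_{p,n}(\lambda_{p,n}^\sharp)$ gives the upper bound; for the matching lower bound I would combine $V_{p,n}(\lambda^m_{p,n}) \ge V_{p,n}(\lambda_{p,n}^\sharp)$ with $\abs{V_{p,n}^m(\lambda^m_{p,n}) - V_{p,n}(\lambda^m_{p,n})} \le \sup_{K}\abs{V_{p,n}^m - V_{p,n}} \to 0$, yielding $V_{p,n}^m(\lambda^m_{p,n}) \to V_{p,n}(\lambda_{p,n}^\sharp)$. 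Finally, from any subsequence I would extract a convergent sub-subsequence $\lambda^{m_j}_{p,n} \to \bar\lambda \in K$; uniform convergence then gives $V_{p,n}(\bar\lambda) = \lim_j V_{p,n}^{m_j}(\lambda^{m_j}_{p,n}) = V_{p,n}(\lambda_{p,n}^\sharp)$, so $\bar\lambda$ belongs to the convex set of minimizers of~\eqref{eq:min_approx}. Since every cluster point of the bounded sequence $(\lambda^m_{p,n})_m$ lies in that set, the distance from $\lambda^m_{p,n}$ to the set of minimizers tends to zero, which completes the proof.
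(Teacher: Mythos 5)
Your proof is correct, and while it follows the same three-step skeleton as the paper (a.s.\ locally uniform convergence of $V_{p,n}^m$ to $V_{p,n}$, eventual confinement of $\lambda^m_{p,n}$ to a fixed compact set, then a sandwich plus cluster-point argument), the two technical pillars are obtained by genuinely different means. For the locally uniform convergence, the paper verifies an integrable envelope condition $\sup_{\abs{\lambda}\le\Lambda}\max_k v_{p,n}(\lambda,k;Z,G)\le \max_k Z_{t_k}+\Lambda\,(\cdots)$ and invokes the uniform law of large numbers of \cite{MR1241645}; you instead use the SLLN on a countable dense set and upgrade via the classical theorem on pointwise convergence of finite convex functions, which is more self-contained and exploits a structure the paper's envelope argument ignores. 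The sharper divergence is in the boundedness step: the paper never proves coercivity of $V_{p,n}^m$ itself, but transfers the gap $\gamma=\inf_{\abs{\lambda-\lambda^\sharp_{p,n}}\ge\Lambda}V_{p,n}(\lambda)-V_{p,n}(\lambda^\sharp_{p,n})>0$ from $V_{p,n}$ to $V_{p,n}^m$ on the sphere of radius $\Lambda$ using the already-established uniform convergence, and then extends it outside the ball by convexity of $V_{p,n}^m$ along rays (the $\gamma/3$ argument); you instead establish an explicit empirical coercivity bound $V_{p,n}^m(\lambda)\ge c'\abs{\lambda}$ by proving two additional uniform laws of large numbers for $\mu\mapsto\abs{C_{p,n}(\mu)}$ and $\mu\mapsto C_{p,n}(\mu)$ on the unit sphere (note that, like the paper's own coercivity argument in Proposition~\ref{prop:existence}, your pathwise bound $\max_k v_{p,n}\ge (C_{p,n}(\lambda))_-$ implicitly uses $Z\ge 0$, so no new assumption is introduced). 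The paper's route is more economical since it recycles the uniform convergence and needs no extra ULLN; yours is more quantitative, yielding a uniform linear lower bound on the empirical objective, and avoids singling out a particular minimizer $\lambda^\sharp_{p,n}$. Your concluding subsequence argument proves by hand exactly what the paper delegates to Theorem A1 of \cite{MR1241645} and its following discussion.
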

\begin{proof}
  The random function $\lambda \in \R^{\spaceIndex} \mapsto \max_{0 \le k \le n}
  v_{p,n}(\lambda, k; Z, G) $ is a.s. continuous. For $\Lambda > 0$,
  \begin{align*}
     \sup_{\abs{\lambda} \le \Lambda} & \max_{0 \le k \le n}
    v_{p,n}(\lambda, k; Z, G)  \\
    & \le \max_{0 \le k \le n} Z_{t_k} + \sup_{\abs{\lambda} \le \Lambda} \max_{0 \le k \le
      n}  \sum_{\alpha \in A^{\otimes d}_{p, n}} \lambda_\alpha \E\left[ \hh^{\otimes
        d}_{\alpha} \left( G_1, \dots, G_n \right) \Big| \cf_{t_k}\right] \\
    & \le \max_{0 \le k \le n} Z_{t_k} + \Lambda \sup_{\abs{\lambda} = 1} \max_{0 \le k \le
      n}  \sum_{\alpha \in A^{\otimes d}_{p, n}} \lambda_\alpha \E\left[ \hh^{\otimes
        d}_{\alpha} \left( G_1, \dots, G_n \right) \Big| \cf_{t_k}\right] \\
    & \le \max_{0 \le k \le n} Z_{t_k} + \Lambda \max_{0 \le k \le n}  \sum_{\alpha \in
      A^{\otimes d}_{p, n}} \abs{\E\left[ \hh^{\otimes d}_{\alpha} \left( G_1, \dots, G_n
        \right) \Big| \cf_{t_k}\right]} \\
    & \le \max_{0 \le k \le n} Z_{t_k} + \Lambda 
   \sum_{\alpha \in A^{\otimes d}_{p, n}}  \max_{0 \le k \le n}  
    \E\left[\abs{ \hh^{\otimes d}_{\alpha} \left( G_1, \dots, G_n
        \right) }\Big| \cf_{t_k}\right] \\
    & \le \max_{0 \le k \le n} Z_{t_k} + \Lambda 
    \sum_{\alpha \in A^{\otimes d}_{p, n}}  \sum_{k = 0}^n
    \E\left[\abs{ \hh^{\otimes d}_{\alpha} \left( G_1, \dots, G_n
        \right) }\Big| \cf_{t_k}\right].
  \end{align*}
  The right hand side of the above inequality is integrable. We apply \cite[Lemma A1 
  Chapter 2]{MR1241645} to deduce that a.s. $V_{p,n}^m$ converges locally uniformly to
  $V_{p,n}$.  From the proof of the Proposition~\ref{prop:existence}, there exits $\Lambda
  > 0$ such that
  \begin{align*}
    \gamma = \inf_{\abs{\lambda - \lambda_{p,n}^\sharp} \ge
      \Lambda} V_{p,n}(\lambda) - V_{p,n}(\lambda_{p,n}^\sharp) > 0.
  \end{align*}
  The local uniform convergence of $V_{p,n}^m$ to $V_{p,n}$ ensures that
  \begin{align*}
    \exists \; m_\gamma \in \N^*, \; \forall m \ge m_\gamma, \; \forall \lambda \mbox{
      s.t. } \abs{\lambda - \lambda_{p,n}^\sharp} \le \Lambda, \quad
      \abs{V_{p,n}^m(\lambda) - V_{p,n}(\lambda)} \le \frac{\gamma}{3}.
  \end{align*}
  For $m \ge m_\gamma$ and $\lambda$ such that $\abs{\lambda - \lambda_{p,n}^\sharp} \ge
  \Lambda$, we deduce, using the convexity of $V_{p,n}^m$, that
  \begin{align*}
    V_{p,n}^m(\lambda) - &  V_{p,n}^m(\lambda_{p,n}^\sharp) \\
    &\ge \frac{\abs{\lambda - \lambda_{p,n}^\sharp}}{\Lambda} \left\{ V_{p,n}^m\left(
          \lambda_{p,n}^\sharp + \Lambda \frac{\lambda -
            \lambda_{p,n}^\sharp}{\abs{\lambda - \lambda_{p,n}^\sharp}} \right) -
        V_{p,n}^m(\lambda_{p,n}^\sharp) \right\}\\
    &\ge \frac{\abs{\lambda - \lambda_{p,n}^\sharp}}{\Lambda} \left\{ V_{p,n}\left(
          \lambda_{p,n}^\sharp + \Lambda \frac{\lambda -
            \lambda_{p,n}^\sharp}{\abs{\lambda - \lambda_{p,n}^\sharp}} \right) -
        V_{p,n}(\lambda_{p,n}^\sharp) - \frac{2 \gamma}{3}\right\} \ge
      \frac{\gamma}{3}.
  \end{align*}
  Since $V_{p,n}^m(\lambda^m_{p,n}) - V_{p,n}^m(\lambda_{p,n}^\sharp) \le 0$, we conclude
  that the above inequality does not hold for $\lambda_{p,n}^m$, which proves that
  $\abs{\lambda^m_{p,n} - \lambda_{p,n}^\sharp} < \Lambda$ for $m \ge m_\gamma$.

  Hence, for $m \ge m_\gamma$,  it is sufficient to minimize $V_{p,n}^m$ on the compact
  set $\{\lambda \; : \; \abs{\lambda - \lambda_{p,n}^\sharp} \le \Lambda\}$. Now, we can
  apply \cite[Theorem A1 of Chapter 2]{MR1241645} to prove that
  $V_{p,n}^m(\lambda^m_{p,n})$ converges to $V_{p,n}(\lambda_{p,n}^\sharp)$ a.s. when $m$
  goes to infinity. The second assertion of our proposition is discussed right after the
  proof of Theorem A1 in \cite{MR1241645}.
\end{proof}

Although $V_{p,n}^{m}$ is not twice differentiable and the classical central limit theorem
for sample average approximations cannot be applied, we can study the variance of
$V_{p,n}^{m}(\lambda_{p,n}^m)$ and obtain some asymptotic bounds. Before stating our
result, we introduce, for $\lambda \in \R^{\spaceIndex}$, the notation $M_k(\lambda) =
\E[C_{p,n}(\lambda) | \cf_{t_k}]$ for $0 \le k \le n$. We write $M^{(i)}_k(\lambda)$ for
the value computed using the sample $G^{(i)}$.

\begin{prop}
  \label{prop:var_saa}
  Assume $\lambda^\sharp_{p,n}$ is unique. Then, 
  \begin{align*}
    \inv{m} \sum_{i=1}^m \left(\max_{0 \le k \le n} Z^{(i)}_{t_k} -
    M^{(i)}_k(\lambda^m_{p,n})\right)^2 - V^{m}_{p,n}(\lambda^m_{p,n})^2
  \end{align*}
  is a convergent estimator of $\Var(\max_{k \le 0 \le n} Z_{t_k} - M_k(\lambda_{p,n}^\sharp))$
  and moreover if $\lambda^m_{p,n}$ is bounded, $\lim_{m \to \infty} m \Var\left(V^{m}_{p,n}(\lambda^m_{p,n})\right) =
  \Var(\max_{k \le 0 \le n} Z_{t_k} - M_k(\lambda_{p,n}^\sharp))$. 
\end{prop}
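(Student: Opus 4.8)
The plan is to reduce both assertions to statements about the single random field $\Psi(\lambda; Z, G) = \max_{0 \le k \le n}(Z_{t_k} - M_k(\lambda)) = \max_{0 \le k \le n} v_{p,n}(\lambda, k; Z, G)$, so that $V_{p,n}(\lambda) = \E[\Psi(\lambda;Z,G)]$ and $V^m_{p,n}(\lambda) = \frac1m \sum_{i=1}^m \Psi(\lambda; Z^{(i)}, G^{(i)})$; the quantity in the first assertion is then just the empirical variance of $(\Psi(\lambda^m_{p,n}; Z^{(i)}, G^{(i)}))_{i \le m}$. For the first assertion I would prove the two separate limits $\frac1m\sum_i \Psi(\lambda^m_{p,n};\cdot)^2 \to \E[\Psi(\lambda^\sharp_{p,n})^2]$ and $V^m_{p,n}(\lambda^m_{p,n}) \to V_{p,n}(\lambda^\sharp_{p,n})$ a.s. The linearity of $M_k$ in $\lambda$ and Cauchy--Schwarz give, for $\abs\lambda \le \Lambda$, the bound $\abs{\Psi(\lambda;Z,G)} \le \max_k\abs{Z_{t_k}} + \Lambda (\sum_{\alpha \in \spaceIndex} \hh^{\otimes d}_\alpha(G_1,\dots,G_n)^2)^{1/2}$, whose right-hand side lies in $L^2$ because $\sup_{t\le T}\abs{Z_t} \in L^2$ and the Hermite factors are polynomials in i.i.d.\ standard Gaussians, hence have finite moments of every order. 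This furnishes integrable local envelopes for both $\Psi$ and $\Psi^2$, so \cite[Lemma A1 Chapter 2]{MR1241645} gives a.s.\ local uniform convergence of $\lambda\mapsto\frac1m\sum_i\Psi(\lambda;\cdot)^q$ to the continuous $\lambda\mapsto\E[\Psi(\lambda)^q]$ for $q\in\{1,2\}$. Since $\lambda^\sharp_{p,n}$ is unique, Proposition~\ref{prop:cv_saa} supplies $\lambda^m_{p,n}\to\lambda^\sharp_{p,n}$ a.s.; evaluating the uniform limits along this sequence yields the two limits above, and the empirical variance converges to $\E[\Psi(\lambda^\sharp_{p,n})^2] - V_{p,n}(\lambda^\sharp_{p,n})^2 = \Var(\Psi(\lambda^\sharp_{p,n};Z,G))$.

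For the second assertion I would set $\sigma^2 = \Var(\Psi(\lambda^\sharp_{p,n};Z,G))$ and $W_m = V^m_{p,n}(\lambda^\sharp_{p,n})$, a genuine sample mean with $m\Var(W_m) = \sigma^2$ for \emph{every} $m$. The key is to sandwich $V^m_{p,n}(\lambda^m_{p,n})$ between $W_m$ and a first-order lower bound. Optimality of $\lambda^m_{p,n}$ gives $V^m_{p,n}(\lambda^m_{p,n}) \le W_m$. For the reverse, I would exploit convexity of $\lambda\mapsto\Psi(\lambda;Z^{(i)},G^{(i)})$: choosing a measurable subgradient $\xi^{(i)}\in\partial_\lambda\Psi(\lambda^\sharp_{p,n};Z^{(i)},G^{(i)})$ (explicitly described by Proposition~\ref{prop:differentiability}) and summing the subgradient inequality gives $V^m_{p,n}(\lambda^m_{p,n}) \ge W_m + \langle \bar\xi_m, \lambda^m_{p,n}-\lambda^\sharp_{p,n}\rangle$ with $\bar\xi_m = \frac1m\sum_i\xi^{(i)}$. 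Under the hypotheses of Proposition~\ref{prop:differentiability} the set $\ci(\lambda^\sharp_{p,n})$ is a.s.\ a singleton, so $\E[\xi^{(1)}]=\nabla V_{p,n}(\lambda^\sharp_{p,n})=0$ by first-order optimality; hence $\bar\xi_m$ is a centred sample mean of a square-integrable vector. The two bounds combine into $\abs{V^m_{p,n}(\lambda^m_{p,n}) - W_m} \le \abs{\bar\xi_m}\,\abs{\lambda^m_{p,n}-\lambda^\sharp_{p,n}}$.

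Writing $R_m = V^m_{p,n}(\lambda^m_{p,n}) - W_m$, I would decompose $\Var(V^m_{p,n}(\lambda^m_{p,n})) = \Var(W_m) + 2\,\mathrm{Cov}(W_m,R_m) + \Var(R_m)$; since $m\abs{\mathrm{Cov}(W_m,R_m)} \le \sigma\sqrt{m\Var(R_m)}$ by Cauchy--Schwarz, everything reduces to $m\Var(R_m)\to0$. From the sandwich bound, $m R_m^2 \le \abs{\lambda^m_{p,n}-\lambda^\sharp_{p,n}}^2\, m\abs{\bar\xi_m}^2$, where $\abs{\lambda^m_{p,n}-\lambda^\sharp_{p,n}}^2\to0$ a.s.\ while $m\abs{\bar\xi_m}^2 = \abs{\sqrt m\,\bar\xi_m}^2$ is tight by the central limit theorem, so $mR_m^2\to0$ in probability. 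To turn this into $m\E[R_m^2]\to0$ I would use the boundedness hypothesis on $\lambda^m_{p,n}$ to dominate $\abs{\lambda^m_{p,n}-\lambda^\sharp_{p,n}}^2$ by a constant, and the fact that $\xi^{(1)}$, a conditional expectation of Hermite polynomials of Gaussians, has moments of every order, so that $\{m\abs{\bar\xi_m}^2\}_m$ is bounded in $L^{1+\delta}$ and hence uniformly integrable; then $mR_m^2$ is uniformly integrable and converges to $0$ in probability, giving $m\E[R_m^2]\to0$ and therefore $m\Var(V^m_{p,n}(\lambda^m_{p,n}))\to\sigma^2$.

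I expect the first assertion to be routine — a local uniform law of large numbers combined with the consistency of Proposition~\ref{prop:cv_saa}. The genuine difficulty lies in the second assertion: linearising through convexity replaces the remainder by $\langle\bar\xi_m,\lambda^m_{p,n}-\lambda^\sharp_{p,n}\rangle$, whose size is controlled by the $1/\sqrt m$ fluctuation of the centred mean $\bar\xi_m$ and the vanishing of $\lambda^m_{p,n}-\lambda^\sharp_{p,n}$, but upgrading this in-probability smallness to the second-moment convergence $m\E[R_m^2]\to0$ is the crux. This is precisely — and, I expect, the only place — where the boundedness of $\lambda^m_{p,n}$ is needed, since together with the finiteness of all moments of the Hermite factors it supplies the uniform integrability required to pass from convergence in probability to convergence of the scaled variance.
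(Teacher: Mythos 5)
Your proof is correct, and while your first assertion follows essentially the paper's route (a locally uniform law of large numbers for the empirical second moment, combined with the consistency $\lambda^m_{p,n}\to\lambda^\sharp_{p,n}$ that uniqueness plus Proposition~\ref{prop:cv_saa} provide), your treatment of the second assertion is genuinely different from --- and sharper than --- the paper's. The paper controls $V^m_{p,n}(\lambda^m_{p,n}) - V^m_{p,n}(\lambda^\sharp_{p,n})$ by the crude Lipschitz bound $\abs{\lambda^m_{p,n}-\lambda^\sharp_{p,n}}\,\max_k\abs{\E[\hh^{\otimes d}(G_1,\dots,G_n)\mid\cf_{t_k}]}$ followed by Cauchy--Schwarz and Doob's maximal inequality; this yields convergence to $0$ in $L^2$ but carries no rate in $m$, so as written it only gives $\Var(V^m_{p,n}(\lambda^m_{p,n}))-\Var(V^m_{p,n}(\lambda^\sharp_{p,n}))\to 0$, which does not by itself yield the stated limit for $m\Var(V^m_{p,n}(\lambda^m_{p,n}))$. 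Your sandwich --- optimality of $\lambda^m_{p,n}$ on one side, the subgradient inequality at $\lambda^\sharp_{p,n}$ on the other --- replaces that Lipschitz factor by the \emph{centred} i.i.d.\ average $\bar\xi_m$, whose $m^{-1/2}$ fluctuation is exactly the missing rate; together with the uniform integrability supplied by the boundedness hypothesis and the Gaussian moment bounds, this genuinely delivers $m\Var(R_m)\to0$ and hence the claimed limit. The one point to tighten: you invoke the hypotheses of Proposition~\ref{prop:differentiability} to get $\E[\xi^{(1)}]=\nabla V_{p,n}(\lambda^\sharp_{p,n})=0$, but the present proposition does not assume~\eqref{eq:Dt_density}, and $\lambda^\sharp_{p,n}$ may have zero components, so differentiability at $\lambda^\sharp_{p,n}$ is not guaranteed. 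You do not need it: first-order optimality gives $0\in\partial V_{p,n}(\lambda^\sharp_{p,n})$, and the interchange formula of \cite{MR0329725} already used in the proof of Proposition~\ref{prop:differentiability} shows that every element of $\partial V_{p,n}(\lambda^\sharp_{p,n})$ is the expectation of a measurable selection of the pathwise subdifferential $\partial_\lambda \max_{k}v_{p,n}(\lambda^\sharp_{p,n},k;Z,G)$; choosing the selection corresponding to $0$ makes $\bar\xi_m$ centred and your argument goes through unchanged.
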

\begin{proof}
  We know that $V^m_{p,n}(\lambda^m_{p,n})$ converges a.s. to
  $V_{p,n}(\lambda^\sharp_{p,n})$.  Following the beginning of the proof of
  Proposition~\ref{prop:cv_saa}, one can easily prove that a.s. the sequence of random
  functions $\zeta^m : \lambda \mapsto \zeta^m(\lambda) = \inv{m} \sum_{i=1}^m
  \left(\max_{0 \le k \le n} Z^{(i)}_{t_k} - M^{(i)}_k(\lambda)\right)^2$ converges
  locally uniformly to the function $\lambda \mapsto \E[(\max_{0 \le k \le n} Z_{t_k} -
  M_k(\lambda))^2]$. We have already seen that for large enough $m$, we can assume to have
  solved the optimization problem under a compact constraint.
  Hence, we deduce that $\inv{m} \sum_{i=1}^m \left(\max_{0 \le k \le n} Z^{(i)}_{t_k} -
  M^{(i)}_k(\lambda^m_{p,n})\right)^2 $ converges a.s. to $\E[(\max_{0 \le k \le n}
  Z_{t_k} - M_k(\lambda^\sharp_{p,n}))^2]$.  This proves the first statement of the
  proposition.

  As $\Var\left( V^m_{p,n}(\lambda^\sharp_{p,n}) \right) = m^{-1}\Var(\max_{k \le 0 \le n}
  Z_{t_k} - M_k(\lambda_{p,n}^\sharp))$, it is sufficient to compute
  \begin{align*}
    & \E\left[ \left( V_{p,n}^m(\lambda^m_{p,n}) - V^m_{p,n}(\lambda^\sharp_{p,n})
    \right)^2 \right] \\
    & \le  \inv{m} \sum_{i=1}^m \E\left[ \max_k \abs{M^{(i)}_k(\lambda^m_{p,n}) -
    M^{(i)}_k(\lambda^\sharp_{p,n})}^2 \right] \\
    & \le  \inv{m} \sum_{i=1}^m \E\left[ \abs{\lambda^m_{p,n} -
      \lambda^\sharp_{p,n}}^2 \max_k \abs{\E\left[ \hh^{\otimes d} (G^{(i)}_1, \dots,
      G^{(i)}_n)  \cond \cf_{t_k}\right]}^2\right]  \\
      & \le \frac{16}{9} \E\left[ \abs{\lambda^m_{p,n} -
      \lambda^\sharp_{p,n}}^4\right]^{1/2} \E\left[ \abs{ \hh^{\otimes d} (G^{(i)}_1, \dots,
      G^{(i)}_n) }^4\right]^{1/2} 
  \end{align*}
  where we have used Cauchy Schwartz' inequality and Doob's maximal inequality. Then, we
  easily conclude that $V^m_{p,n}(\lambda^m_{p,n}) - V^m_{p,n}(\lambda^\sharp_{p,n})$
  converges to $0$ in $\L^2$ if $\lambda^m_{p,n}$ is bounded. Hence, $\lim_{m \to \infty}
  \Var\left( V^m_{p,n}(\lambda^m_{p,n})\right) - \Var\left(
  V^m_{p,n}(\lambda^\sharp_{p,n})\right) = 0$. 
\end{proof}

Proposition~\ref{prop:var_saa} enables us to monitor the variance of our estimator online
as for a standard Monte Carlo estimator. Even though the terms involved in
$V^m_{p,n}(\lambda^m_{p,n})$  are not independent, the classical variance estimator gives
the right result. In practice, one should not feel concerned with the boundedness
condition used in the proposition as we know from the proof of
Proposition~\ref{prop:cv_saa} that for large enough $m$ we can impose a compactness
constraint to the optimization problem without changing its result. Hence, one can
pragmatically rely on the proposed variance estimator.

\section{The algorithm}
\label{sec:algo}

Any optimization algorithm requires to repeatedly compute $V^m_{p,n}$ and therefore the
truncated chaos expansion, which becomes the most time consuming part of our approach as
the dimension and/or $p$ increase. A lot of computational time can be saved by considering
slightly modified martingales, which only start the first time the option goes in the
money.

\subsection{An improved set of martingales}

We define the first time the option goes in the money by 
\[
  \tau_0 = \inf\{k \ge 0 \; : \; Z_{t_k} > 0\} \wedge n,
\]
which is a $\cf-$ stopping time and becomes a $\cg-$ stopping time when the sequence
$(Z_{t_k})_k$ is $\cg-$ adapted.   To consider martingales only starting once the option
has been in the money, we define
\begin{align*}
  N_k(\lambda) = \sum_{\ell = 1}^k (M_{\ell}(\lambda) - M_{\ell - 1}(\lambda)) 
  \ind{\ell - 1 \ge \tau_0} = (M_k(\lambda) - M_{\tau_0}(\lambda)) \ind{k > \tau_0} =
  M_k(\lambda) - M_{k \wedge \tau_0}(\lambda)
\end{align*}
We easily check that $N(\lambda)$ is a $(\cf_{t_k})_{0 \le k \le n}-$ martingale. It is
clear from the proof proposed by~\cite{rogers} that in the dual price of a Bermudan option
(see~\eqref{eq:prix_dual}) the maximum can be shrunk to the random interval $[\tau_0, n]$.
Hence, it is sufficient to consider 
\begin{equation*}
  \inf_{\lambda \in \R^{\spaceIndex}, \; \lambda_0 = 0}
  \E\left[ \max_{\tau_0 \le k \le n} (Z_{t_k} - M_k(\lambda)) \right].
\end{equation*}
Using Doob's stopping theorem, we have, for any fixed $\lambda$, 
\[
  \E\left[ \max_{\tau_0 \le k \le n} (Z_{ t_k} - M_{k}(\lambda))\right]
  = \E\left[ \max_{\tau_0 \le k \le n} (Z_{ t_k} - (M_{k}(\lambda) -
  M_{\tau_0}(\lambda)))\right]
  = \E\left[ \max_{\tau_0 \le k \le n} (Z_{ t_k} - N_{k}(\lambda))\right].
\]
We deduce from this equality that minimizing over either set of martingales $M(\lambda)$
or $N(\lambda)$ leads to the same minimum value and that both problems share the same
properties, which justifies why we did not take into account the in--the--money condition
for the theoretical study. However, considering the set of martingales $N^\lambda$ is far
more efficient from a practical point of view. 

In our numerical examples, we modify $V_{p,n}$ and $V_{p,n}^m$ to take into account this
improvement and consider instead
\begin{align*}
  \tilde V_{p,n}(\lambda)  = \E\left[ \max_{\tau_0 \le k \le n} (Z_{t_k} - N_k(\lambda)) \right]
  \quad \text{and} \quad
  \tilde V^m_{p,n}(\lambda)  = \frac{1}{m} \sum_{i=1}^m \max_{\tau_0 \le k \le n} (Z^{(i)}_{t_k} -
  N^{(i)}_k(\lambda)).
\end{align*}
The idea of using martingales starting from the first time the option goes in the money is
actually owed to \cite{rogers}. Although he did not discuss it much, this was his choice in
the examples he treated.

\subsection{Our implementation of the algorithm}

To practically compute the infimum of $\tilde V^m_{p,n}$, we advise to use a gradient
descent algorithm, see Algorithm~\ref{algo}. The efficiency of such an approach mainly
depends on the computation of the descend direction. When the problem is not twice
differentiable, the gradient at the current point is used as a descent direction but it often
needs to be scaled, which makes the choice of the step size $\alpha_\l$ a burning issue to
ensure a fast numerical convergence. We refer to~\cite{boyd03} for a comprehensive survey
of several step size rules. After many tests, we found that the step size rule
proposed by~\cite{polyak87} was the best performing one in our context
\begin{align*}
  \alpha_\l = \frac{\tilde V^m_{p,n}(x_\l) - v^\sharp}{\norm{\nabla \tilde V^m_{p,n}(x_\l)}^2}
\end{align*}
where $v^\sharp$ is the price of the American option we are looking for. In practice, we
use the price of the associated European option instead of $v^\sharp$, which makes
$\alpha_\l$ too large and explains the need of the magnitude factor $\gamma$. The value of
the European price does not need to be very accurate. A decent and fast approximation can be
computed with a few thousand samples within few seconds no matter the dimension of the
problem.

\begin{algorithm}[ht]
    Generate $(G^{(1)}, Z^{(1)}), \dots, (G^{(m)}, Z^{(m)})$ $m$ i.i.d. samples
    following the law of $(Z, G)$ \;
    $x_0 \gets 0 \in \R^{\spaceIndex}$\;
    $\l \gets 0$, $\gamma \gets 1$, $d_0 \gets 0$, $v_0 \gets \infty$ \;
    \label{a:outer} \While{True}{
      Compute $v_{\l+1/2} \gets \tilde V_{p,n}^m(x_\l - \gamma \alpha_\l d_\l) $ \;
       \label{a:compute_V} \uIf{$v_{\l+1/2} < v_\l$}{
        $x_{\l+1} \gets x_\l - \gamma \alpha_\l d_\l$ \;
        $v_{\l+1} \gets v_{\l+1/2}$ \;
        $d_{\l+1} \gets \nabla \tilde V_{p,n}^m(x_{\l+1})$ \;\label{a:dk1} 
        \lIf{$\frac{\abs{v_{\l+1} - v_\l}}{v_\l} \le \varepsilon$}{\Return}
      } 
      \Else {
        $\gamma \gets \gamma / 2$ \;
      }
    }
  \caption{Sample Average Approximation of the dual price}
  \label{algo}
\end{algorithm}

To better understand how this algorithm works, it is important to note that as
$N(\lambda)$ linearly depends on $\lambda$, $N(\lambda) = \lambda \cdot \nabla_\lambda
N(\lambda)$ and therefore both the value function and its gradient are computed at the
same time without extra cost.  So, $\nabla \tilde V_{p,n}^m(x_{\l+1})$ is not actually
computed on line~\ref{a:dk1}  but at the same time as $v_{\l+1/2}$ on
line~\ref{a:compute_V}.

\SetKwBlock{Parallel}{In parallel do}{end}
\SetKw{Broadcast}{Broadcast}
\SetKw{Reduce}{Make a reduction of}
\begin{algorithm}[ht]
  \Parallel{Generate $(G^{(1)}, Z^{(1)}), \dots, (G^{(m)}, Z^{(m)})$ $m$ i.i.d. samples
    following the law of $(Z, G)$}
    $x_0 \gets 0 \in \R^{\spaceIndex}$ \;
    $\l \gets 0$, $\gamma \gets 1$, $d_0 \gets 0$, $v_0 \gets \infty$ \;
    \While{True}{
      \Broadcast{$x_\l$, $d_\l$, $\gamma$, $\alpha_\l$}\;
      \Parallel{Compute $\max_{\tau_0 \le k \le n} (Z^{(i)}_{t_k} - N^{(i)}_k(x_\l - \gamma
      \alpha_\l d_\l))$ for $i=1,\dots,m$}
      \Reduce{the above contributions to obtain  $\tilde V_{p,n}^m(x_\l - \gamma \alpha_\l d_\l)$ and
      $\nabla \tilde V_{p,n}^m(x_\l - \gamma \alpha_\l d_\l)$}\;
      $v_{\l+1/2} \gets \tilde V_{p,n}^m(x_\l - \gamma \alpha_\l d_\l)$ \;
       \uIf{$v_{\l+1/2} < v_\l$}{
        $x_{\l+1} \gets x_\l - \gamma \alpha_\l d_\l$ \;
        $v_{\l+1} \gets v_{\l+1/2}$ \;
        $d_{\l+1} \gets \nabla \tilde V_{p,n}^m(x_{\l+1})$ \; 
        \lIf{$\frac{\abs{v_{\l+1} - v_\l}}{v_\l} \le \varepsilon$}{\Return}
      } 
      \Else {
        $\gamma \gets \gamma / 2$ \;
      }
    }
    \caption{Parallel implementation of the Sample Average Approximation of the dual price}
    \label{algo-parallel}
\end{algorithm}

\paragraph{The HPC approach.} Our method targets large problems with as many as several
thousands of components for $\lambda$. This requires to design a scalable algorithm capable
of making the most of cluster architectures with hundreds of nodes. At each iteration, the
computation of $\tilde V_{p,n}^m$ and $\nabla \tilde V_{p,n}^m$ is nothing but a standard
Monte Carlo method and it inherits from its embarrassingly parallel nature.

A parallel algorithm for distributed memory systems based on the master/slave paradigm is
proposed in Algorithm~\ref{algo-parallel}. At the beginning, each process samples a bunch
of the $m$ paths (lines 1--3). Then, at each iteration the master process broadcasts the
value of $d_\l$, $x_\l$, $\alpha_\l$ and $\gamma$ (line 7 of Algorithm~\ref{algo}). With
these new values, each process computes its contribution to $\tilde V_{p,n}^m(x_\l -
\gamma \alpha_\l d_\l)$ and $\nabla \tilde V_{p,n}^m(x_\l - \gamma \alpha_\l d_\l)$ (lines
8--9) and the Monte Carlo summations are obtained by two simple reductions (line 11).
Then, the master process tests whether the move is admissible and updates the parameter
for the next iteration or returns the solution if the algorithm is not moving enough
anymore. This part carried out by the master process is very fast compared to the rest of
the code and we dare say that there is no centralized computation in our algorithm.
Moreover the communications are reduced to fours broadcasts, which guarantees an almost
perfect very good scalability. The number of communications is monitored by the number of
function evaluations, which remains quite small (between $10$ and $20$). We study the
efficiency of our algorithm on a few examples at the end of Section~\ref{sec:numerics}. 

\paragraph{Study of the complexity.} Most of the computational time is spent computing the
martingale part; remember that the cardinality of $\cc_{p,n}$ is given by $\binom{n d +
p}{n d} = \frac{(n d + p) \dots (n d + 1)}{p!}$. Using martingales only starting once the
option has been in the money enables us to only compute the martingale part on paths going
in the money strictly before maturity time. Depending on the product, this may allow for 
saving a lot of computational time. The complexity of one iteration of loop
line~\ref{a:outer} in Algorithm~\ref{algo} is proportional to 
\begin{align*}
  \sharp\{\text{paths in the money strictly before T}\} \times \binom{n d + p}{n d}.
\end{align*}
The payoffs are computed once and for all before starting the descent algorithm. It is
worth noting that its computational cost becomes negligible compared to the optimization
part when the dimension of the model or the number of dates increase, the most demanding
computation being the evaluation of the martingale decomposition.

\section{Applications}
\label{sec:numerics}

\subsection{Some frameworks satisfying the assumption of Proposition
  \ref{prop:differentiability}}
\label{sec:density_condition}

Let $(r_t)_t$ be the instantaneous interest rate supposed to be deterministic. 

\subsubsection{A put basket option in the multi--dimensional Black Scholes model}
\label{sec:basket-bs}

The $d-$dimensional Black Scholes model writes fori $j \in \{1, \dots, d\}$
\begin{align*}
  dS^j_t = S^j_t ( (r_t - \delta^j) dt + \sigma^j L_j dB_t)
\end{align*}
where $W$ is a Brownian motion with values in $\R^d$, $\sigma_t = (\sigma_t^1, \dots,
\sigma_t^d)$ is the vector of volatilities, assumed to be deterministic and positive at
all times, $\delta = (\delta^1, \dots, \delta^d)$ is the vector of instantaneous dividend
rates and $L_j$ is the $j$-th row of the matrix $L$ defined as a square root of the
correlation matrix $\Gamma$, ie.  $\Gamma = L L'$. Moreover, we assume that $L$ is lower
triangular. Clearly, for every $t$, the random vector $S_t$ is an element of $\D^{1,2}$.

The payoff of the put basket option writes as $\phi(S_t) = \left( K - \sum_{i=1}^d
  \omega^j S_t^j \right)_+$ where $\omega = (\omega^1, \dots, \omega^d)$ is a vector of real
valued weights. The function $\phi$ is Lipschitz continuous and hence $\phi(S_t) \in
\D^{1,2}$ for all $t$. Moreover, for $s \le t$ and $q \in \{1, \dots, d\}$, we have on the set
$\{\phi(S_t) > 0\}$
\begin{align*}
  D^q_s \phi(S_t) = \sum_{j=1}^d \omega^j S_t^j \sigma^j L_{j,q}.
\end{align*}
In particular for $q=d$, we get $D^d_s \phi(S_t) = \omega^d S_t^d \sigma^d L_{d,d}$.

Let $1 \le k \le n$ and $F$ be a non zero and $\cf_{t_k}-$measurable element of
$\cc_{p-1,n}$, ie.
\[
  F = \sum_{\alpha \in A^{\otimes d, k}_{p-1, n}} \lambda_\alpha
  \hh^{\otimes d}_{\alpha} \left( G_1, \dots, G_n \right)
\]
for some $\lambda \in \R^{A^{\otimes d}_{p, n}}$. 
Let $1 \le r \le k$.
\begin{align}
  \label{eq:Dt_bs_basket}
  &  \P\left(\forall t \in ]t_{r-1},t_r], \; D_t^d \phi(S_{t_k}) + F  = 0 \cond
    \phi(S_{t_k}) > 0\right) \nonumber \\
  &  = \P\left(\forall t \in ]t_{r-1},t_r], \; \omega^d S_{t_k}^d \sigma_t^d L_{d,d} + F  = 0
    \cond \phi(S_{t_k}) > 0 \right) \nonumber \\
  &  \le \frac{\P\left(\forall t \in ]t_{r-1},t_r], \; \omega^d S_{t_k}^d \sigma_t^d
      L_{d,d} + F  = 0 \right)}{\P(\phi(S_{t_k}) > 0)}.
\end{align}
If $p=1$, then $F$ is a deterministic non zero constant. In this case, the numerator
vanishes because $S_{t_k}^d$ has a density. Assume $p \ge 2$, then $F$ is a multivariate
polynomial with global degree $p - 1 \ge 1$. Then we can find $\l \in \{1, \dots, k\}$,
$q \in \{1, \dots, d\}$ and $\alpha$ such that $\alpha_\l^q \ge 1$ and $\lambda_\alpha \ne
0$. Let $\hat\cg$ be the sigma algebra generated by $(G_i^j, 1 \le i \le k, 1 \le
j \le d, (i, j) \ne (\l,q))$.
\begin{align*}
  \P\left(\forall t \in ]t_{r-1},t_r], \; \omega^d S_t^d \sigma_t^d L_{d,d} + F  = 0 \right) =
  \E\left[ 
    \P\left(\forall t \in ]t_{r-1},t_r], \; \omega^d S_t^d \sigma_t^d L_{d,d} + F  = 0 
  \cond \hat \cg \right) \right].
\end{align*}
Conditioning on $\hat\cg$, the random variable $\omega^d S_t^d \sigma_t^d L_{d,d} + F$ only
depends on $G_\l^q$. Consider the algebraic equation for $x \in \R$
\begin{equation}
  \label{eq:algebraic}
  a \expp{b x + c} = P(x)
\end{equation}
where $(a, b, c) \in \R^3, a \ne 0, b \ne 0$ and $P$ is polynomial with degree $p-1 \ge
1$. Let $f(x) = a \expp{b x + c} - P(x)$, $f^{(p)}(x) = a b^p \expp{b x + c}$. Clearly,
$f^{(p)}$ never vanishes, which ensures that $f$ has at most $p$ different roots. Hence,
we deduce that for any $t \in ]t_{r-1},t_r]$,  $\P\left(\omega^d S_t^d \sigma_t^d L_{d,d} +
F  = 0 \cond  \hat \cg \right) = 0 $. Combining this result along with~\eqref{eq:Dt_bs_basket}
proves that Equation~\eqref{eq:Dt_density} holds in this setting.

\subsubsection{A put option on the minimum of a basket in the multi--dimensional Black Scholes model} 
\label{sec:min-bs}
  
We use the notation of the previous example. The payoff of the put option on the minimum
of $d$ assets write $\phi(S_t) = (K - \min_j(S^j_t))_+$. One can prove by induction on $d$
that the function $x \in \R^d \longmapsto \min_j(x^j)$ is $1-$Lipschitz for the $1-$norm
on $\R^d$. Hence, as the positive part function is also Lipschitz, the payoff function
$\phi$ is Lipschitz. Then, \cite[Proposition 1.2.4]{nualart_98} yields that for all $t \in
[0,T]$, $\phi(S_t) \in \D^{1,2}$ and for all $q \in \{1,\dots, d\}$, 
\[
  D^q(\phi(S_t)) = \sum_{j=1}^d \partial_{x^j} \phi(S_t) D^q(S_t^j) = \sum_{j=1}^d
  \partial_{x^j} \phi(S_t) S_t^j \sigma^j L_{j,q}.
\]
With our choice for the matrix $L$, 
\[
  D^d(\phi(S_t)) = \partial_{x^d} \phi(S_t)  S_t^d \sigma^d L_{d,d}
  = -  S_t^d \sigma^d L_{d,d} \ind{\phi(S_t) > 0} \ind{\min_j (S_t^j) = S_t^d}.
\]
Let $1 \le k \le n$ and $F$ be a non zero and $\cf_{t_k}-$measurable element of
$\cc_{p-1,n}$. For $1 \le r \le k$,
\begin{align*}
  &  \P\left(\forall t \in ]t_{r-1},t_r], \; D_t^d \phi(S_{t_k}) + F  = 0 \cond
    \phi(S_{t_k}) > 0\right) \nonumber \\
  & =  \P\left(\forall t \in ]t_{r-1},t_r], \; -  S_t^d \sigma^d L_{d,d} + F  = 0 \cond
  \phi(S_{t_k}) > 0, \; \min_j (S_t^j) = S_t^d\right) \P\left(\min_j (S_t^j) = S_t^d\right) \nonumber \\
  & \quad +  \P\left(\forall t \in ]t_{r-1},t_r], \; F  = 0 \cond
  \phi(S_{t_k}) > 0, \; \min_j (S_t^j) \ne S_t^d\right) \P\left(\min_j (S_t^j) \ne S_t^d\right)
\end{align*}
Clearly, the second term in the above sum is zero as $F$ has a density. Hence,
\begin{align*}
  \P\left(\forall t \in ]t_{r-1},t_r], \; D_t^d \phi(S_{t_k}) + F  = 0 \cond
    \phi(S_{t_k}) > 0\right)
   \le  \frac{\P\left(\forall t \in ]t_{r-1},t_r], \; -  S_t^d \sigma^d L_{d,d} + F  =
    0\right)}{ \P(\phi(S_{t_k}) > 0)}.
\end{align*}
We conclude as in the case of the put basket option.

\subsubsection{A put option in the Heston model} 
\label{sec:put-hes}

The Heston model can be written
\begin{align*}
  dS_t &= S_t(r_t dt + \sqrt{\sigma_t} (\rho dW^1_t + \sqrt{1 - \rho^2} dW^2_t) \\
  d\sigma_t &=  \kappa (\theta - \sigma_t) dt + \xi \sqrt{\sigma_t} dW^1_t.
\end{align*}
For $s \le t$, $D^2_s S_t = S_t \sqrt{1-\rho^2} \sqrt{\sigma_t}$. Conditionally on $W^1$,
$D^2_s S_t$ writes as $a \expp{b W^2_t + c}$ and we can unfold the same reasoning
as after~\eqref{eq:algebraic}.

\subsection{Numerical experiments}
In this part, we present results obtained from a sequential implementation of our approach
as described in Algorithm~\ref{algo}. The computations are run on a standard laptop with
an Intel Core i5 processor 2.9 Ghz. For each experiment, we report the price obtained
using Algorithm~\ref{algo} along with its computational time and standard deviation.

\subsubsection{Examples in the Black Scholes models}

We consider the $d-$dimensional Black Scholes as presented in Section~\ref{sec:basket-bs}.
For the sake of simplicity in choosing the parameters, we have decide to use the same
correlation between all the assets, which amounts to considering the following simple
structure for $\Gamma$.
\begin{equation}
  \label{eq:covstruct}
  \Gamma = \begin{pmatrix}
    1 & \rho & \hdots & \rho\\
    \rho & 1 &\ddots & \vdots\\
    \vdots&\ddots&\ddots& \rho\\
    \rho &\hdots & \rho & 1 
  \end{pmatrix}
\end{equation}
where $\rho \in ]-1 / (d-1), 1]$ to ensure that $\Gamma$ is positive definite.

\paragraph{A basket option in the Black--Scholes model.}

We consider a put option on several assets as presented in Section~\ref{sec:basket-bs}.
We report in Table~\ref{tab:basket-bs} the price obtained with our approach for $m=20,000$.
The last column \emph{reference price} corresponds to the prices reported
in~\cite{schoen12-1} on the same examples. These \emph{reference} prices were obtained
within a few minutes according to the authors whereas here we manage to get similar values
within a few seconds. We can see that a second order chaos expansion, $p=2$, already
gives very accurate results within a few tenths of a second for a $5-$dimensional problem
with $6$ dates, which proves the impressive efficiency of our approach.
\begin{table}[ht]
  \centering \begin{tabular}{ccccccc}
    $p$ &  $n$ & $S_0$ &  price & Stdev & time (sec.) & reference price \\
    \hline
    $2$ & $3$ & $100$ & $2.27$ & $0.029$ & $0.17$  & $2.17$ \\
    $3$ & $3$ & $100$ & $2.23$ & $0.025$ & $0.9$ & $2.17$ \\
    $2$ & $3$ & $110$ & $0.56$ & $0.014$ & $0.07$  & $0.55$ \\
    $3$ & $3$ & $110$ & $0.53$ & $0.012$ & $0.048$ & $0.55$ \\
    $2$ & $6$ & $100$ & $2.62$ & $0.021$ & $0.91$  & $2.43$ \\
    $3$ & $6$ & $100$ & $2.42$ & $0.021$ & $14$    & $2.43$ \\
    $2$ & $6$ & $110$ & $0.61$ & $0.012$ & $0.33$  & $0.61$ \\
    $3$ & $6$ & $110$ & $0.55$ & $0.008$ & $10$    & $0.61$ \\
    \hline
  \end{tabular}
  \caption{Prices for the put basket option with parameters $T=3$, $r = 0.05$, $K=100$, $\rho=0$,
  $\sigma^j = 0.2$, $\delta^j = 0$, $d = 5$, $\omega^j = 1 / d$.}
  \label{tab:basket-bs}
\end{table}

\paragraph{A call on the maximum of $d$ assets in the Black--Scholes model.}

We consider a call option on the maximum of $d$ assets in the Black Scholes model.  As in
the previous example, the last column \emph{reference price} corresponds to the prices reported
in~\cite{schoen12-1} on the same examples.
\begin{table}[ht]
  \centering \begin{tabular}{cccccccc}
    $d$ & $p$ & $m$ & $S_0$ & price & Stdev & time (sec.) & reference price \\
    \hline
    $2$ & $2$ & $20,000$ & $90$  & $10.18$ & $0.07$ & $0.4$  & $8.15$  \\
    $2$ & $3$ & $20,000$ & $90$  & $8.5$   & $0.05$ & $4.1$  & $8.15$  \\
    $2$ & $2$ & $20,000$ & $100$ & $16.2$  & $0.06$ & $0.54$ & $14.01$ \\
    $2$ & $3$ & $20,000$ & $100$ & $14.4$  & $0.06$ & $5.6$  & $14.01$ \\
    $5$ & $2$ & $20,000$ & $90$  & $21.2$  & $0.09$ & $2$    & $16.77$ \\
    $5$ & $3$ & $40,000$ & $90$  & $16.3$  & $0.05$ & $210$  & $16.77$ \\
    $5$ & $2$ & $20,000$ & $100$ & $30.7$  & $0.09$ & $3.4$  & $26.34$ \\
    $5$ & $3$ & $40,000$ & $100$ & $26.0$  & $0.05$ & $207$  & $26.34$ \\
    \hline
  \end{tabular}
  \label{tab:max-bs}
  \caption{Prices for the call option on the maximum of $d$ assets with
    parameters $T=3$, $r = 0.05$, $K=100$, $\rho=0$, $\sigma^j = 0.2$, $\delta^j = 0.1$, $n =
  9$.}
\end{table}
 With no surprise, the computational time increases exponentially with the dimension $n
 \times d$ and the degree $p$. Whereas a second order expansion provides very
 accurate results for the basket option, it only gives a rough upper--bound for the call
 option on the maximum of $d$ assets. Considering a third order expansion $p=3$ takes far
 longer but enables us to get very tight upper--bounds.

\paragraph{A geometric basket option in the Black--Scholes model}

Benchmarking a new method on high dimensional products becomes hardly feasible as almost no
high dimensional American options can be priced accurately in a reasonable time. An
exception to this is the geometric option with payoff $(K - (\prod_{j=1}^d
S^j_t)^{1/d})_+$ for the put option. Easy calculations show that the price of
this $d-$dimensional option equals the one of the $1-$dimensional option with parameters
\begin{align*}
  \hat S_0 = \left( \prod_{j=1}^d S_0^j \right)^{1/d}; \quad
  \hat \sigma = \inv{d} \sqrt{\sum_{i,j} \sigma^i \sigma^j \Gamma_{ij}}; \quad
  \hat \delta  = \frac{1}{d} \sum_{j=1}^d \left( \delta^j + \inv{2} (\sigma^j)^2 \right)
  - \inv{2} (\hat \sigma)^2.
\end{align*}
Table~\ref{tab:geom-correspondence} summarizes the correspondence values used in the
examples.
\begin{table}[ht]
  \centering
  \begin{tabular}{ccccccc}
    $d$  & $S_0$ & $\sigma$ & $\rho$ & $\hat S_0$ & $\hat \sigma$ & $\hat \delta$ \\
    \hline
    $2$  & $100$ & $0.2$    & $0$    & $100$      & $0.14$        & $0.01$      \\
    $10$ & $100$ & $0.3$    & $0.1$  & $100$      & $0.131$       & $0.036$       \\
    $40$ & $100$ & $0.3$    & $0.1$  & $100$      & $0.105$       & $0.039$ \\
    \hline
  \end{tabular}
  \caption{Correspondence table for the parameters of the geometric options with $\delta^j = 0$.}
  \label{tab:geom-correspondence}
\end{table}

\begin{table}[ht]
  \centering
  \begin{tabular}{ccccccccc}
    $d$  & $ \sigma^j$ & $ \rho$ & $p$ & $m$     & price  & Stdev  & time(sec) & $1-$d price \\
    \hline
    $2$  & $0.2$ & $0$   & $2$ & $5000$  & $4.32$ & $0.04$ & $ 0.018$ & $4.20$ \\
    $2$  & $0.2$ & $0$   & $3$ & $5000$  & $4.15$ & $0.04$ & $1.3$    & $4.20$ \\
    $10$ & $0.3$ & $0.1$ & $1$ & $5000$   & $5.50$ & $0.06$ & $0.12$   & $4.60$ \\
    $10$ & $0.3$ & $0.1$ & $2$ & $20000$  & $4.55$ & $0.02$ & $17$     & $4.60$ \\
    $40$ & $0.3$ & $0.1$ & $1$ & $10000$ & $4.4$  & $0.03$ & $1.4$    & $3.69$ \\
    $40$ & $0.3$ & $0.1$ & $2$ & $20000$ & $3.61$ & $0.02$ & $170$    & $3.69$ \\
    \hline
  \end{tabular}
  \caption{Prices for the geometric basket put option with parameters $T=1$, $r = 0.0488$
  (it corresponds to a $5\%$ annual interest rate), $K=100$, $\delta^j = 0$, $n = 9$.}
  \label{tab:geom-bs}
\end{table}

The $1-d$ price is computed using  a tree method with several thousand steps.
We can see in Table~\ref{tab:geom-bs} that a second order approximation gives very
accurate result within a few seconds for an option with $10$ underlying assets, which
proves the efficiency of our approach. We cannot beat the curse of dimensionality, which
slows down of algorithm for very large problems. For an option on $40$ assets, we obtain
a price up to a $3\%$ relative error within $3$ minutes which is already very
fast for such a high dimensional problem. The number of terms involved in the chaos
expansion can become very large: for $d=40$ and $p=2$, there are $65340$ elements in
$\cc_{p,n}$. Even though we are not working in a linear algebra framework, it is advisable
to ensure that the number of samples $m$ used in the sample average approximation is
larger than the number of free parameters in the optimization problem. When $m$ becomes
too small, we may face an over--fitting phenomenon as the number of parameters is far too
large compared to the information contained in the sample average approximation. This
probably explains why the price obtained for $p=2$, $d=40$ and $m=40$ is slightly smaller
than the true price.

In the next paragraph, we test the scalability of Algorithm~\ref{algo-parallel} on this
particular examples for a larger number of samples.

\subsubsection{Scalability of the parallel algorithm}

We consider the $40-$dimensional geometric put option studied in Table~\ref{tab:geom-bs}
with $p=2$ and test the scalability of our parallel implementation for $m=200,000$. The
tests are run on a BullX DLC supercomputer containing $190$ nodes for a total of $3204$
CPU cores. We report in Table~\ref{tab:scalability} the results of our scalability study
using from $1$ to $512$ cores. Despite the two levels of parallelism available on this
supercomputer, we have used a pure MPI implementation without any reference to multithread
programming. We could probably have improved the efficiency a bit using two levels of
parallelism, but the results are already convincing enough and do not justify the need of
a two level approach, which makes the implementation more delicate.  The sequential
Algorithm runs within one hour and a quarter whereas using $512$ cores we manage to get
the computational time down to a dozen of seconds, which corresponds to a $0.6$
efficiency. Considering the so short wall time required by the run on $512$ cores, keeping
the efficiency at this level represents a great achievement. Note that with $128$ cores,
the code runs within a minute with an efficiency of three quarters. These experiments
prove the impressive scalability of our algorithm.

\begin{table}[ht]
  \centering
  \begin{tabular}{ccc}
    \#processes & time (sec.) & efficiency \\
    \hline
    1           & 4365        & 1          \\
    2           & 2481        & 0.99       \\
    4           & 1362        & 0.90       \\
    16          & 282         & 0.84       \\
    32          & 272         & 0.75       \\
    64          & 87          & 0.78       \\
    128         & 52          & 0.73       \\
    256         & 34          & 0.69       \\
    512         & 10.7        & 0.59       \\
    \hline 
  \end{tabular}
  \caption{Scalability of Algorithm~\ref{algo-parallel} on the $40-$dimensional
  geometric put option described above with $T=1$, $r = 0.0488$, $K=100$, $\sigma^j=0.3$,
$\rho=0.1$, $\delta^j = 0$, $n = 9$, $p=2$.}
  \label{tab:scalability}
\end{table}

\section{Conclusion}

We have proposed a purely dual algorithm to compute the price of American or Bermudan
options using some stochastic optimization tools. The starting point of our algorithm is
the use of Wiener chaos expansion to build a finite dimensional vector space of
martingales.  Then, we rely on a sample average approximation to effectively optimize the
coefficients of the expansion.  Our algorithm is very fast: for problems up to
dimension $5$, a price is obtained within a few seconds, which is a tremendous improvement
compared to existing purely dual methods. For higher dimensional problems, we can use a
very scalable parallel algorithm to tackle very high dimensional problems ($40$ underlying
assets). We can transparently deal with complex path--dependent payoffs without any extra
computational cost. Event though, we restricted to a Brownian setting in this work, our
approach could easily be extended to jump diffusion models by introducing Poisson chaos
expansion, which is linked to Charlier polynomials (see~\cite{Geiss2016}). We believe that
our approach could be improved by cleverly reducing the number of terms in the chaos
expansion, the computation of which centralizes most of the effort.

\bibliographystyle{abbrvnat}
\bibliography{chaos-am-biblio.bib}
\end{document}